\renewcommand{\includegraphics}{\epsfbox}
\pgfplotsset{compat=1.7}
\DeclareMathOperator{\Hom}{Hom}
\renewcommand{\le}{\leqslant}
\newcommand{\C}{\mathbb{C}}
\newcommand{\R}{\mathbb{R}}
\newcommand{\Z}{\mathbb{Z}}
\DeclareMathOperator{\supp}{supp}
\DeclareMathOperator{\Mod}{mod}
\DeclareMathOperator{\ch}{ch}
\newcommand{\GL}{\mathrm{GL}}
\newcommand{\TL}{\mathrm{TL}}
\newcommand{\sym}{\mathfrak{S}}
\DeclareMathOperator{\cha}{char}
\DeclareMathOperator{\Br}{Br}
\DeclareMathOperator{\Span}{Span}
\newcommand{\Sym}{\mathfrak{S}}
\begin{document}
\theoremstyle{plain}
\newtheorem{thm}{Theorem}[section]
\newtheorem{propn}[thm]{Proposition}
\newtheorem{cor}[thm]{Corollary}
\newtheorem{clm}[thm]{Claim}
\newtheorem{lem}[thm]{Lemma}
\newtheorem{conj}[thm]{Conjecture}
\theoremstyle{definition}
\newtheorem{defn}[thm]{Definition}
\newtheorem{rem}[thm]{Remark}
\newtheorem{exmp}[thm]{Example}

\title{On semi-simplicity of KMY algebras}
\author{N. M. Alraddadi \and A. E. Parker}
\address{
Mathematics Department\\ Al-Lith University College\\ Umm Al-Qura
University\\ Al-Lith\\ Saudi Arabia}

\address{Department of Mathematics \\ University of Leeds \\ Leeds,
  LS2 9JT \\ UK}
\email{nmraddadi@uqu.edu.sa}
\email{a.e.parker@leeds.ac.uk}



\begin{abstract}
We study the algebra, $J_{l,n}(\delta)$, introduced by Kadar-Martin-Yu
in  \cite{kadar}.
We show that these algebras are iterated inflation algebras.
We give a set of generators for $J_{l,n}(\delta)$.
We show that this algebra satisifies the CMPX axiomatic framework in
 \cite{cox2006representation} when the base field is the complex
numbers and $\delta \ne 0$.
We show that $J_{l,n}(\delta)$ is semisimple if $\delta$ is complex
and not real. 
\end{abstract}

\maketitle

\section*{Introduction}\label{intro}

This paper continues the study of a certain family of algebras that
live inside the Brauer algebra, thought of diagrammatically.
These algebras were introduced by Kadar, Martin and Yu in
\cite{kadar}.
We will introduce these algebras with less formality than that
used in \cite{kadar} as once certain crucial properties are
established (e.g. that heights cannot increase upon multiplication)
the strict formulism of \cite{kadar} is more of a hindrance to
understanding. We give a rather more seminar style introduction to
these algebras with the knowledge that should more formalism be
needed, it is all there in \cite{kadar} for the interested reader.

The bulk of this paper is taken from the first author's PhD thesis,
the second author was the first author's main supervisor. We have
added one result not in the thesis, that KMY algebras are iterated
inflation algebras, and generalised another, extending the result
about semi-simplicity to all $l$.

There are four new main results in this paper. The first,
Theorem~\ref{thm:iterate},  is that the KMY
algebras are iterated inflation algebras, as is true for Brauer
algebras.

The second, section~\ref{cmpx}, is that when the ring (field) of
coefficients is $\C$ then  
the KMY algebras satisfy
the axiomatic framework for a tower of recollement introduced in \cite{cox2006representation}. Much of the
groundwork for this result was laid down in the KMY paper,
\cite{kadar}.

The third, Theorem~\ref{thm:genset}, gives a generating set for the
KMY algebra.

The fourth, Theorem~\ref{thm:semisimple}, gives a criterion for the
KMY algebra to be semi-simple, when the ring (field) of coeefficients
is $\C$, namely that the parameter $\delta$ is not real. 

\section{Review}\label{review}

A \emph{pair partition} of a set  is a set partition of the set where each part
has size two (i.e. each element is paired with another).
A \emph{Brauer diagram} is a picture/diagram of a pair partition of
    $$
    \{1,2,\ldots, n\} \cup \{1',2',\ldots,n'\}
    $$
which is a rectangle (also called a \emph{frame}) with $n$ (labelled) vertices on the top and $n$
(labelled) vertices on
the bottom that are joined by a line,  (which is often smooth and we
allow to be isotopically deformed),  if the labels on the end vertices
form a pair in the
pair partition. The labels on the vertices are usually omitted if it is clear.
The important information here is the pairs in the pair partition,
represented visually in the diagram, rather than the details of the
diagram itself.
E.g. for  $n=5$,  the pair partition $\{\{1,2\},\{2',3\}, \{1,4\},
\{4',5\},\{3',5'\}\}$ can be represented by the diagram:
$$
\begin{picture}(0,0)%
\includegraphics{nis5fig1.pstex}%
\end{picture}%
\setlength{\unitlength}{3947sp}%
\begingroup\makeatletter\ifx\SetFigFont\undefined%
\gdef\SetFigFont#1#2#3#4#5{%
  \reset@font\fontsize{#1}{#2pt}%
  \fontfamily{#3}\fontseries{#4}\fontshape{#5}%
  \selectfont}%
\fi\endgroup%
\begin{picture}(2274,1378)(289,-641)
\put(2251,614){\makebox(0,0)[lb]{\smash{{\SetFigFont{10}{12.0}{\rmdefault}{\mddefault}{\updefault}{\color[rgb]{0,0,0}$5$}%
}}}}
\put(451,-586){\makebox(0,0)[lb]{\smash{{\SetFigFont{10}{12.0}{\rmdefault}{\mddefault}{\updefault}{\color[rgb]{0,0,0}$1'$}%
}}}}
\put(901,-586){\makebox(0,0)[lb]{\smash{{\SetFigFont{10}{12.0}{\rmdefault}{\mddefault}{\updefault}{\color[rgb]{0,0,0}$2'$}%
}}}}
\put(1351,-586){\makebox(0,0)[lb]{\smash{{\SetFigFont{10}{12.0}{\rmdefault}{\mddefault}{\updefault}{\color[rgb]{0,0,0}$3'$}%
}}}}
\put(1801,-586){\makebox(0,0)[lb]{\smash{{\SetFigFont{10}{12.0}{\rmdefault}{\mddefault}{\updefault}{\color[rgb]{0,0,0}$4'$}%
}}}}
\put(2251,-586){\makebox(0,0)[lb]{\smash{{\SetFigFont{10}{12.0}{\rmdefault}{\mddefault}{\updefault}{\color[rgb]{0,0,0}$5'$}%
}}}}
\put(451,614){\makebox(0,0)[lb]{\smash{{\SetFigFont{10}{12.0}{\rmdefault}{\mddefault}{\updefault}{\color[rgb]{0,0,0}$1$}%
}}}}
\put(901,614){\makebox(0,0)[lb]{\smash{{\SetFigFont{10}{12.0}{\rmdefault}{\mddefault}{\updefault}{\color[rgb]{0,0,0}$2$}%
}}}}
\put(1351,614){\makebox(0,0)[lb]{\smash{{\SetFigFont{10}{12.0}{\rmdefault}{\mddefault}{\updefault}{\color[rgb]{0,0,0}$3$}%
}}}}
\put(1801,614){\makebox(0,0)[lb]{\smash{{\SetFigFont{10}{12.0}{\rmdefault}{\mddefault}{\updefault}{\color[rgb]{0,0,0}$4$}%
}}}}
\end{picture}%

\ \raisebox{40pt}{.}
$$

We fix $n$ and use all possible diagrams as a basis for a vector space
over a field (or ring) $K$.
To make an algebra we stack diagrams on top of each other to multiply:
E.g.
$$
\raisebox{-0.45\height}{\includegraphics{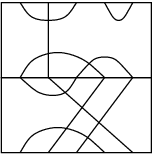}}
\quad = 
\quad \raisebox{-0.4\height}{\includegraphics{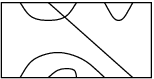}}
$$
We might get loops:
$$
\raisebox{-0.45\height}{\includegraphics{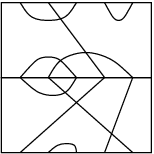}}
\quad = \quad \delta
\quad \raisebox{-0.4\height}{\includegraphics{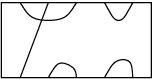}}
$$
The power of the parameter $\delta\in K$ records the number of loops
removed.
$$
\raisebox{-10pt}{\includegraphics{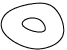}}\rightsquigarrow \delta^2
\qquad
\raisebox{-5pt}{\includegraphics{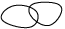}} \rightsquigarrow \delta^2
$$

$\raisebox{-10pt}{\includegraphics{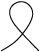}}$
A twist gets untwisted --- we don't record untangling of twists.

The Brauer algebra has the symmetric group algebra as a sub-algebra via
permutation diagrams.
E.g. for $n=5$:
$$
\raisebox{-15pt}{\includegraphics{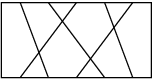}}
\leftrightarrow \begin{pmatrix}1&2&3&4&5\\ 2&4&1&5&3\end{pmatrix}
$$

We call a line \emph{propagating} if it goes from top to bottom.
Otherwise it's called an \emph{arc},
or sometimes a cap $\cap$ or a cup $\cup$.

With this diagram realisation we have
$$
K\mathfrak{S}_n \hookrightarrow \mathrm{Br}_n
$$
where $\mathfrak{S}_n$ is the symmetric group on $\{1,2,\ldots,n\}$
and
$\mathrm{Br}_n$ is the Brauer algebra on $n$.

Note:
$\raisebox{-5pt}{\includegraphics{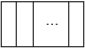}}$
is the identity permutation and is the same as the identity diagram in
$\mathrm{Br}_n$.

We note that the representation theory of $\mathrm{Br}_n$ is at least as
complicated as that for $K\mathfrak{S}_n$ and indeed it's unknown for
$\cha K =p <n$.

There is another algebra inside $\Br_n$, namely the
\emph{Temperley-Lieb algebra}, $\TL_n$.

Take a \emph{planar} representation of a diagram.
If this can be deformed in the plane to have \emph{no} crossings
then it's a \emph{Temperley-Lieb diagram}.
E.g. 
$$\raisebox{-15pt}{\includegraphics{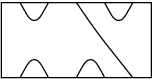}}
\qquad \mbox{or} \qquad
\raisebox{-15pt}{\includegraphics{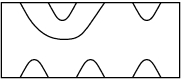}}
$$
The representation theory of $\TL_n$ is \emph{very} well
understood. (Analogous to $q$-$\GL_2$.)

Are there algebras in between that might be easier to understand?
Well yes.
Take a Brauer diagram.
Define \emph{height} as follows.
Take the following diagram:
$$
\includegraphics{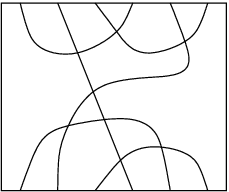}
$$
We first consider the interior connected regions (alcoves) in the diagram and
number from $0$ from the left, increasing the number by one if we
cross a line.
$$
\includegraphics{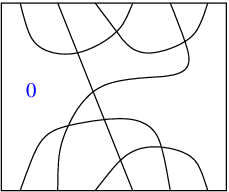}
\quad \raisebox{40pt}{$\to$} \quad
\includegraphics{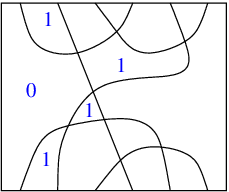}
\quad \raisebox{40pt}{$\to$} \quad
\includegraphics{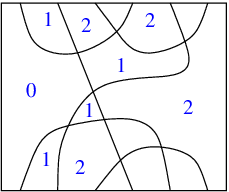}
$$
$$
\quad \raisebox{40pt}{$\to$} \quad
\includegraphics{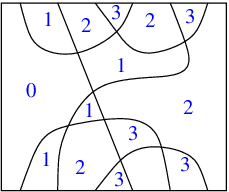}
\quad \raisebox{40pt}{$\to$} \quad
\includegraphics{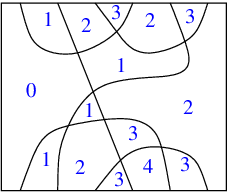}
$$
The number in each alcove therefore records the minimum number of
lines that are crossed by any path joining that alcove to the left
most side of the rectangle (frame) for the diagram.

We circle all the \emph{crossing points}, that is the points where two lines
intersect. (Remembering that we do not allow diagrams where more than
2 lines intersect at a point.) We then label each crossing point with
the smallest number of the alcove that the circle around the crossing
point intersects.
$$
\quad \raisebox{40pt}{$\to$} \quad
\includegraphics{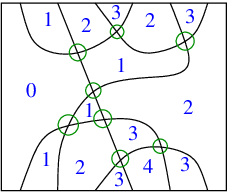}
\quad \raisebox{40pt}{$\to$} \quad
\includegraphics{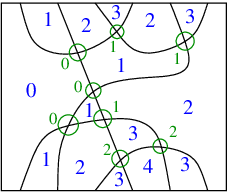}
$$
The \emph{height} of this particular diagram is then the maximum of all the
heights of the crossing points.
The \emph{height} of a diagram (as a pair partition) is the minimum of the
heights of all possible ways of representing the pair partition with a
planar diagram.

Temperley-Lieb diagrams which have no crossings, are defined to have height $-1$.

That this definition is possible is covered in \cite{kadar}.

Let $h(d)$ be the height of a diagram $d$.
In \cite{kadar} they prove:
$$
h(d_1d_2) \le \max\{ h(d_1), h(d_2)\}
$$
where $d_1$ and $d_2$ are Brauer diagrams.

That is the  height can go \emph{down} but never go \emph{up}  when we
multiply.

Thus diagrams of height at most $l$ give a basis for a subalgebra,
$J_{l,n}(\delta)$, of
$\Br_n$.

This algebra was first defined in \cite{kadar} and we will henceforth call it
\emph{the KMY-algebra}.

We have 
$$
J_{-1,n}(\delta) = \TL_n
$$
$$
J_{n-2,n}(\delta) = \Br_n
$$
as Brauer diagrams have maximum height $n-2$.

Many of the properties of $\Br_n$ are also true for 
$J_{l,n}(\delta)$.

Let $\#(d)$ be the number of propagating lines in $d$.
Then: $$ \#(d_1d_2) \le \min \{\#(d_1), \#(d_2)\} .$$

We define ideals,  $I_m = \langle d \in J_{l,n}(\delta) \mid \#(d) \le m \rangle$,
and these ideals filter the algebra.
For $n$ even:
$$I_0 \le I_2 \le \cdots \le I_{n-2} \le I_n =
J_{l,n}(\delta).$$
And for $n$ odd:
$$ I_1 \le I_3 \le \cdots \le I_{n-2} \le I_n =
J_{l,n}(\delta). $$

One result of \cite{kadar} is that the KMY algebra, $J_{l,n}(\delta)$,
is \emph{cellular} with a similar cell basis as for
$\Br_n$.
This uses half diagrams and bases for Specht modules for $K\Sym_r$,
where 
 $r\le n$.
Our labelling poset for the cell modules consists of pairs $(r,
\lambda)$ where $r$ is the number of propagating lines in the half
diagram basis for the cell module, and $\lambda$ is a partition of
either $l+2$ or $r$ depending on which is smaller.

 Half diagrams are formed from cutting the propagating lines.
$$
\includegraphics{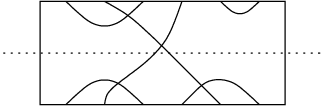}
$$
We ignore crossings in the propagating lines:
$$
\includegraphics{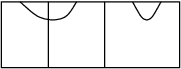}
$$

Let $S^{\lambda}$ be a Specht module for $K\Sym_m$, with
$\lambda$ a partition of $m$, $\lambda \vdash m$.

We form a basis for the cell module $\Delta_{l,n}(r,\lambda)$ by taking
half diagrams with $r$ propagating lines, height at most $l$, $\lambda \vdash
\min\{r,l+2\}$ and a (fixed) basis for $S^\lambda$.

NB:  We are agnostic
about what basis to take for the Specht module. We treat it rather
like a black box. In the applications in this paper, the Specht
modules will all be simple.

Let $v$ be  in a basis for $S^\lambda$. 
Elements of the half diagram basis look like: 
$$
\includegraphics{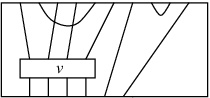}
$$
We get an action of $J_{l,n}(\delta)$  like so:
$$
\includegraphics{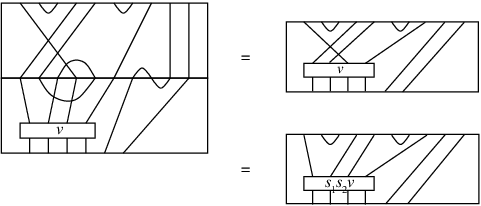}
$$
where $s_1s_2=(1,2)(2,3)$ is a permutation.
That is any crossings in the propagating lines (which mean the written
diagram is not in the basis) get converted into a permutiation on
$\min\{r, l+2\}$ lines which is then pushed into the box representing
the element of the Specht module. We then act with this permutation in
the Specht module, getting ultimately a linear combination of basis elements. 

An arc acts as zero:
$\raisebox{-5pt}{\includegraphics{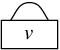}}=0$. (Kills off propagating lines.)

E.g. for $n=4$, $l=0$ we get
$$
\Delta_{0,4}(0,\emptyset) = \left\langle
\raisebox{3pt}{\includegraphics{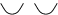}},
\raisebox{1.5pt}{\includegraphics{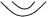}},
\raisebox{3pt}{\includegraphics{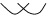}}
\right\rangle
$$
$$
\Delta_{0,4}(2,\lambda) = \left\langle
\raisebox{-5pt}{\includegraphics{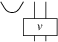}},
\raisebox{-5pt}{\includegraphics{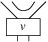}},
\raisebox{-5pt}{\includegraphics{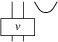}},
\raisebox{-5pt}{\includegraphics{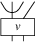}}
\right\rangle
$$
where $\lambda \vdash 2$ and $S^\lambda = \Span\{v\}$.
$$
\Delta_{0,4}(4,\lambda) = \left\langle
\raisebox{-5pt}{\includegraphics{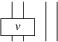}},
\right\rangle
$$
where $\lambda \vdash 2$  and $S^\lambda = \Span\{v\}$.

We may use these bases to define Gram matrices and determinants as
usual for cellular algebras.

\section{KMY algebras are iterated inflation algebras}

\begin{thm}\label{thm:iterate}
The algebra $J_{l,n}(\delta)$ is an iterated inflation of group
algebras of symmetric groups.

I.e.,  the algebra $J_{l,n}(\delta)$ has a filtration by ideals:
n even:
$$
I_0 \le I_2 \le \cdots \le I_{n-2} \le I_n = J_{l,n}(\delta).
$$
n odd:
$$
I_1 \le I_3 \le \cdots \le I_{n-2} \le I_n = J_{l,n}(\delta).
$$
where
$$ I_{m}/I_{m-2} \cong 
V_{m}\otimes  k\sym_{m_l} \otimes V_{m}^\circ  
$$ 
as a $K$-module, $m_l = \min\{l+2, m\}$,
$V_m$ is the free $K$-module with basis
the upper half diagrams on $m$ propagating lines,
and $V_m^\circ$ is the free $K$-module with basis
the lower half diagrams on $m$ propagating lines.
\end{thm}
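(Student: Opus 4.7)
The plan is to adapt the iterated-inflation construction of König--Xi for the Brauer algebra to the height-restricted setting, leveraging the cellular structure of $J_{l,n}(\delta)$ from \cite{kadar}. The propagation-count inequality $\#(d_1d_2)\le\min\{\#(d_1),\#(d_2)\}$ recalled above already shows that each $I_m$ is a two-sided ideal, so the filtration is as claimed, and the substantive content is to identify the subquotients $I_m/I_{m-2}$ and check that multiplication on each subquotient has the required inflation form.

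For the $K$-module structure, a diagram $d$ in $J_{l,n}(\delta)$ with exactly $m$ propagating lines decomposes uniquely by cutting along the propagating lines into an upper half diagram $u(d) \in V_m$, a ``middle'' matching the $m$ upper endpoints to the $m$ lower ones, and a lower half diagram $w(d) \in V_m^\circ$. Using the cellular basis of \cite{kadar} together with the standard identity $\sum_{\lambda\vdash m_l}(\dim S^\lambda)^2 = m_l!$ from the regular representation of $\Sym_{m_l}$, a dimension count gives
$$
\dim(I_m/I_{m-2}) \;=\; \sum_{\lambda\vdash m_l}(\dim V_m\cdot\dim S^\lambda)^2 \;=\; (\dim V_m)^2\cdot m_l! \;=\; \dim(V_m\otimes k\Sym_{m_l}\otimes V_m^\circ),
$$
so the two $K$-modules have the same rank and can be identified by matching bases, grouping cellular basis elements of each cell $\lambda\vdash m_l$ into the corresponding block of $k\Sym_{m_l}$.

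For the inflation structure I would verify directly that, modulo $I_{m-2}$,
$$
(u\otimes\sigma\otimes w)(u'\otimes\sigma'\otimes w') \;=\; \delta^{N(w,u')}\; u\otimes\sigma\cdot\phi_m(w,u')\cdot\sigma'\otimes w',
$$
where $N(w,u')$ counts the closed loops created by stacking $w$ atop $u'$ and $\phi_m(w,u')\in k\Sym_{m_l}$ records the permutation induced on the surviving propagating endpoints. When the contraction $w\cdot u'$ kills a propagating line, the product has strictly fewer than $m$ propagating lines and hence lies in $I_{m-2}$ by the propagation inequality; otherwise the formula is simply the diagram-stacking computation read through the $K$-module identification of the previous paragraph. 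Bilinearity of $\phi_m$ and associativity of the inflation multiplication are inherited from the corresponding properties of $\Br_n$.

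The main obstacle is establishing that $\phi_m(w,u')$ genuinely lies in $k\Sym_{m_l}$ rather than in the naive $k\Sym_m$. When $m\le l+2$ these coincide and there is nothing to check. For $m>l+2$ one must invoke the height inequality $h(d_1d_2)\le\max\{h(d_1),h(d_2)\}$ from \cite{kadar} to argue that any permutation arising from the concatenation of two height-$\le l$ half diagrams can have non-trivial crossings only within the $l+2$ ``active'' strands determined by the half diagrams, and therefore can be recorded as an element of $\Sym_{l+2}=\Sym_{m_l}$. This is the only step that requires a careful diagrammatic analysis beyond the formal book-keeping; once it is in place, the remaining inflation axioms follow automatically.
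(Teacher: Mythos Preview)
Your proposal is correct and aligns with the paper's approach, which simply defers in two lines to the cell-module basis construction from \cite{kadar} and the Brauer-algebra analogy. Your outline is considerably more detailed than what the paper actually provides; in particular, your identification of the $\Sym_{m_l}$-versus-$\Sym_m$ issue for $m>l+2$ is exactly where the height restriction bites, and its resolution is implicit in the cellular structure established in \cite{kadar}.
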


We may take
 $I_m = \langle d \in J_{l,n}(\delta) \mid \#(d) \le m \rangle$ 

The construction of the cell module basis essentially gives us this
result and it follows exactly as for the Brauer algebra.

\section{KMY algebras satisfy the CMPX axioms}\label{cmpx}

For this section and much of the remainder of the paper we will set
$K=\C$, the complex numbers. The main sticking point for generalising
to other $K$ is axiom (A2). This could be generalised if we can ensure
that the symmetric group algebra $K\sym_n$ was semisimple for that
$K$. I.e. we have to assume that $\ch K > l$, and this is indeed
possible, in a way that it's not for the Brauer algebra, where we
would need $\ch K > n$ with $n$ increasing up the chain. We leave the
details to the interested reader.

In 2006, Cox, Martin, Parker and Xi \cite{cox2006representation},
introduce an axiomatic framework for studying the representation
theory of tower of algebras. Each family of algebras that satisfy
these axioms is called a \emph{tower of recollement}.

\subsection{Axioms for towers of recollement}

Let $e$ be an idempotent of an algebra $A.$
\begin{thm}[\cite{green1980polynomial}]
 Let $\{L(\lambda)\mid \lambda \in \Lambda\}$ be a full set of simple
 $A$-modules, and set $\Lambda^e=\{\lambda \in \Lambda \mid
 eL(\lambda)\neq 0\}.$ Then $\{eL(\lambda)\mid \lambda\in \Lambda^e
 \}$ is a full set of simple $eAe$-modules. Further, the simple
 modules $L(\lambda)$ with $\lambda\in \Lambda\setminus\Lambda^e$ are
 a full set of simple $A/AeA$-modules.
\end{thm}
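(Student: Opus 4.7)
The plan is to prove both claims using the Schur-style truncation functor $F_e : A\text{-mod} \to eAe\text{-mod}$, $M \mapsto eM$, and its left adjoint $G : N \mapsto Ae \otimes_{eAe} N$, together with the elementary observation that an $A$-module $M$ is annihilated by the two-sided ideal $AeA$ if and only if $eM = 0$.

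For the first assertion I would proceed in three steps. Step one: if $L = L(\lambda)$ is simple with $eL \neq 0$, then any nonzero $v = ev \in eL$ satisfies $Av = L$ by simplicity of $L$, whence $(eAe)v = eAv = eL$; this shows $eL$ is cyclic with no proper nonzero $eAe$-submodules, hence simple. Step two: using the counit $G F_e \Rightarrow \id$, there is a canonical surjection $G(eL(\lambda)) \twoheadrightarrow L(\lambda)$ (onto because $A \cdot eL(\lambda) = L(\lambda)$), so $L(\lambda)$ is recovered as the unique simple quotient of $G(eL(\lambda))$ with nonzero $e$-part; an isomorphism $eL(\lambda) \cong eL(\mu)$ thus forces $L(\lambda) \cong L(\mu)$, giving $\lambda = \mu$. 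Step three: given any simple $eAe$-module $S$, the identity $F_e G(S) = eAe \otimes_{eAe} S = S$ shows $eG(S) = S \neq 0$, and I would take $N \subseteq G(S)$ maximal among $A$-submodules with $eN = 0$, so that $M := G(S)/N$ satisfies $eM = S$; a brief case analysis on $eM'$ for a proper $A$-submodule $M' \subsetneq M$ then shows $M$ is simple, yielding the desired $L(\lambda) = M$ with $eL(\lambda) \cong S$.

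For the second assertion, the condition $AeA \cdot M = 0$ on a simple $A$-module $M$ is equivalent to $eM = 0$: taking $1 \in A$ forces one direction, and conversely $eM = 0$ gives $AeAM = Ae(AM) = AeM = 0$. Consequently the simple $A$-modules $L(\lambda)$ with $\lambda \in \Lambda \setminus \Lambda^e$ are precisely the simple $A$-modules that factor through the quotient map $A \twoheadrightarrow A/AeA$, and every simple $A/AeA$-module is the inflation of exactly one such $L(\lambda)$.

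The genuinely delicate point is step three above: extracting a simple quotient of $G(S)$ whose $e$-image is all of $S$, rather than losing $S$ into the radical of $G(S)$. Maximality of $N$ handles the case $eM' = 0$ (which would enlarge $N$), while cyclicity of $M$, generated over $A$ by $m_0 = \overline{e \otimes s_0}$ for any $eAe$-generator $s_0$ of $S$, handles the case $eM' = S$ (which forces $m_0 \in M'$, so $M' = M$). Once this is in place, the subsequent matching of $\Lambda^e$ with the appropriate cell-module labels is immediate.
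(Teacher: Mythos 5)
Your proposal is correct. Note, however, that the paper offers no proof of this statement at all: it is quoted verbatim from Green's \emph{Polynomial Representations of $GL(n)$} (Section 6.2 there) as a known black-box result underpinning the quasi-heredity/recollement formalism, so there is nothing in the paper to compare your argument against. What you have written is the standard proof of that result: the Schur functor $F_e=e(-)$ and its left adjoint $G=Ae\otimes_{eAe}(-)$, the decomposition $Ae=eAe\oplus(1-e)Ae$ giving $F_eG\cong\mathrm{id}$, the quotient of $G(S)$ by its unique maximal submodule with vanishing $e$-part, and the identification of $\{eM=0\}$ with $\{AeA\cdot M=0\}$ for the second assertion. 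All the steps check out, including the two delicate points you flag explicitly (injectivity of $\lambda\mapsto eL(\lambda)$ via the counit surjection $GF_eL(\lambda)\twoheadrightarrow L(\lambda)$ whose kernel has zero $e$-part, and the dichotomy $eM'=0$ versus $eM'=S$ in step three); your argument would serve as a self-contained proof if the authors chose to include one.
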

Let $A_n$ with $n\geq 0$ be a family of finite-dimensional algebras with idempotents $e_n$ in $A_n$.
\begin{defn}[{\cite[Section 1]{cox2006representation}}]
Let $\Lambda_n$ be the indexing set for the simple $A_n$-modules and
$\Lambda^n$ be the indexing set for the simple
$A_n/A_ne_nA_n$-modules.
\end{defn}
\begin{defn}[{\cite[Section 1]{cox2006representation}}]
For $m, n \in \mathbb{N}$ with $m-n$ even we set
$\Lambda_m^n=\Lambda^n$ regarded as a subset of $\Lambda_m$ if
$m\geq n$, and $\Lambda_m^n=\emptyset$ otherwise.
\end{defn}
\begin{defn}[{\cite[Section 1]{cox2006representation}}]
A $\Delta_n$-filtration is defined to be a filtration of a module such
that successive quotients are isomorphic to $\Delta_n(\mu)$ for some
\end{defn}
\begin{defn}[{\cite[Section 1]{cox2006representation}}]
If a module $M$ in $A_n$-$\Mod$ has a $\Delta_n$-filtration we define
the support of $M$, denoted $\supp_n(M)$, to be the set of
labels $\lambda$ for which $\Delta_n(\lambda)$ occurs in this
filtration.
\end{defn}
Now we present the axioms of a tower of recollement as in \cite{cox2006representation}.
\begin{enumerate}
\item [(A1)] For each $n\geq 2$ we have an isomorphism
  $$
  \Phi_n:A_{n-2}\longrightarrow e_nA_ne_n.
  $$
  We have
  $$
  \Lambda_n=\Lambda^n\sqcup\Lambda_{n-2}.
  $$
\end{enumerate}
Set $e_{n,0}=1$ in $A_n$ and $1\leq i \leq \frac{n}2$ define new
idempotents in $A_n$ by setting $e_{n,i}= \Phi_n(e_{n-2,i-1}).$
\begin{enumerate}
 \item [(A2)]For each $n\geq 0$ the algebra $A_n$ is quasi-hereditary
   with heredity chain of the form
   $$
  0\subset \cdots \subset A_ne_{n,i}A_n \subset \cdots \subset A_ne_{n,0}A_n=A_n.
  $$
   As $A_n$ is quasi-hereditary, there is for each $\lambda \in
   \Lambda_n$ a standard module $\Delta_n(\lambda)$ with simple head
   $L_n(\lambda)$. If we set $\lambda \leq \mu$ if either
   $\lambda=\mu$ or $\lambda\in \Lambda_n^r$ and $\mu\in \Lambda_n^s$
   with $r>s$, then all other composition factors of
   $\Delta_n(\lambda)$ are labelled by weights $\mu$ with
   $\mu<\lambda$. For $\lambda\in \Lambda_n^n$, we have that
   $\Delta_n(\lambda)\cong L_n(\lambda),$ and that this is just the
   lift of a simple module for the quotient algebra $A_n/A_ne_nA_n$.
\end{enumerate}
 We recall the localisation functor, $\mathcal{F}_n$,
 \begin{align*}
 \mathcal{F}_n:A\text{-}\Mod&\rightarrow eAe\text{-}\Mod\\
 M&\rightarrow eM
 \end{align*}
and the globalisation functor, $\mathcal{G}_n$,
 \begin{align*}
 \mathcal{G}_n:eAe\text{-}\Mod&\rightarrow A\text{-}\Mod   \\
 N&\rightarrow Ae\otimes_{eAe}N. 
 \end{align*}
From \cite[Proposition 3]{martin2004virtual} we have
$$
\mathcal{G}_n(\Delta_n(\lambda))\cong \Delta_{n+2}(\lambda).
$$
$$
          \mathcal{F}_n(\Delta_n(\lambda))\cong \begin{cases}
\Delta_{n-2}(\lambda), & \mbox{if } \lambda \in \Lambda_{n-2} \\
0, & \mbox{if } \lambda \in \Lambda^n.
\end{cases}
$$
\begin{enumerate}
       \item[(A3)] For each $n\geq 0$ the algebra $A_n$ can be identified with a subalgebra of $A_{n+1}$.
       \item[(A4)] For all $n\geq 1$ we have $A_ne_n\cong A_{n-1}$ as a left $A_{n-1}$-, right $A_{n-2}$-bimodule.
where we view $A_ne_n$ as a left $A_{n-1}$ module via the embedding in
(A3), and as a right $A_{n-2}$ via the isomorphism from (A1) and we
view $A_{n-1}$ as a right $A_{n-2}$-module via the embedding from (A3).
       \item[(A5)] For each $\lambda \in \Lambda_n^m$ we have that the
         restriction from $A_n$ to $A_{n-1}$ (via the embedding in
         (A3)), $\Delta_n(\lambda)\downarrow$, has a $\Delta$-filtration and
       $$
         \supp(\Delta_n(\lambda)\downarrow)\subseteq \Lambda_{n-1}^{m-1}\sqcup \Lambda_{n-1}^{m+1}.
       $$
     \item[(A6)]
       For each $\lambda \in \Lambda_n^n$ there exist $\mu\in
       \Lambda_{n+1}^{n-1}$ such that $\lambda \in
       \supp(\Delta_{n+1}(\mu)\downarrow).$
\end{enumerate}
If axioms (A1) to (A6) are satisfied then we have the following
theorem for finding homomorphisms between two cell modules and a
criterion for semi-simplicity.
\begin{thm} [{\cite[theorem 1.1]{cox2006representation}}]    \label{thm:thm1.1}
Let $\lambda\in \Lambda_n^m$ and $\mu\in \Lambda_n^t$. 
\begin{enumerate}
  \item[(i)]  For all pairs of weights $\lambda\in \Lambda_n^m$ and
    $\mu\in \Lambda_n^t$ we have
    $$
  \Hom(\Delta_n(\lambda),\Delta_n(\mu))\cong\begin{cases}
  \Hom(\Delta_m(\lambda),\Delta_m(\mu)), & \mbox{if } t\leq m \\
  0, & \mbox{otherwise}.
  \end{cases}
 $$
  \item[(ii)]   Suppose that for all $n\geq 0$ and pairs of weights
    $\lambda\in \Lambda_n^n$ and $\mu\in \Lambda_n^{n-2}$ we have
    $$
   \Hom\bigl(\Delta_n(\lambda),\Delta_n(\mu)\bigr)=0.
   $$
\end{enumerate}
Then each of the algebras $A_n$ is semisimple.
\end{thm}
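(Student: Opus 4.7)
The strategy is to handle the two parts separately: part~(i) is a recollement-type identification of Hom spaces, and part~(ii) combines (i) with axiom (A5) and an outer induction on $n$ to upgrade the hypothesised single-stratum vanishing into full semisimplicity.

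For part~(i) I would split on the relation between $t$ and $m$. If $t>m$ then axiom (A2) gives $\mu<\lambda$, so any non-zero homomorphism $\Delta_n(\lambda)\to\Delta_n(\mu)$ would place $L_n(\lambda)$ among the composition factors of $\Delta_n(\mu)$; but those are all $\le\mu$, contradicting $\mu<\lambda$. If $t\le m$ I would induct on $n-m$, the base $n=m$ being trivial. For the inductive step both $\lambda$ and $\mu$ have superscripts at most $n-2$ and so lie in $\Lambda_{n-2}$; the localisation functor $\mathcal{F}_n$ therefore sends $\Delta_n(\lambda)$ and $\Delta_n(\mu)$ to $\Delta_{n-2}(\lambda)$ and $\Delta_{n-2}(\mu)$ respectively. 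The heart of the step is that $\mathcal{F}_n$ is fully faithful on this Hom, which follows from the recollement between $A_n/A_ne_nA_n$ and $e_nA_ne_n$: every composition factor of $\Delta_n(\mu)$ is labelled by a weight $\le\mu$, hence lies in $\Lambda_{n-2}$ and is not killed by $e_n$. Applying the inductive hypothesis at level $n-2$ then reduces the Hom to level $m$, as claimed.

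For part~(ii) I would prove by induction on $n$ that $A_n$ is semisimple. Suppose not; then some $\Delta_n(\lambda)$ is non-simple, and the standard quasi-hereditary argument (using $[\Delta_n(\lambda):L_n(\lambda)]=1$ together with the fact that the socle must contain some $L_n(\nu)$ with $\nu<\lambda$) produces a non-zero homomorphism $\Delta_n(\nu)\to\Delta_n(\lambda)$ with $\nu\in\Lambda_n^s$, $\lambda\in\Lambda_n^m$ and $s>m$. By part~(i) this Hom equals $\Hom_{A_s}(\Delta_s(\nu),\Delta_s(\lambda))$, where now $\nu\in\Lambda_s^s$ is top level. By the outer inductive hypothesis $A_{s-1}$ is semisimple, so both $\Delta_s(\nu)\downarrow$ and $\Delta_s(\lambda)\downarrow$ are semisimple $A_{s-1}$-modules, and the natural injection
$$
\Hom_{A_s}(\Delta_s(\nu),\Delta_s(\lambda))\hookrightarrow\Hom_{A_{s-1}}(\Delta_s(\nu)\downarrow,\Delta_s(\lambda)\downarrow)
$$
reduces the vanishing to whether the supports of the two restrictions overlap. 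Axiom (A5) places these supports in $\Lambda_{s-1}^{s-1}$ and $\Lambda_{s-1}^{m-1}\sqcup\Lambda_{s-1}^{m+1}$ respectively (noting $\Lambda_{s-1}^{s+1}=\emptyset$): if $m<s-2$ the strata are disjoint and the Hom vanishes, while if $m=s-2$ the overlap is exactly $\Lambda_{s-1}^{s-1}$ and the required vanishing is precisely the hypothesis of (ii). Either way we reach a contradiction, completing the induction.

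The main technical point is the fully-faithful claim used in part~(i): one has to verify that $\mathcal{F}_n$ preserves the Hom space, and this is where the precise form of axiom (A2), together with the recollement for the idempotent $e_n$, does the real work. Part~(ii), once (i) is in hand, amounts to a transparent disjoint-support comparison combined with the inductive semisimplicity of $A_{s-1}$; interestingly this route does not require axiom (A6), though (A6) presumably plays a role in other proofs of the same result and in showing that the single hypothesised vanishing is also necessary.
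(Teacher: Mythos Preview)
The paper does not contain a proof of this theorem: it is quoted verbatim from \cite{cox2006representation} and then restated as Theorem~\ref{J_{l,n}semisimple} for the particular algebras $J_{l,n}(\delta)$, in both cases without proof. So there is nothing in the present paper to compare your argument against.

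That said, your outline has one genuine gap in part~(i). In the inductive step you justify that $\mathcal{F}_n$ preserves the Hom space by asserting that ``every composition factor of $\Delta_n(\mu)$ is labelled by a weight $\le\mu$, hence lies in $\Lambda_{n-2}$ and is not killed by $e_n$''. The ``hence'' is false: with the ordering in axiom~(A2), $\nu\le\mu$ means $\nu$ lies in a stratum with index \emph{at least} $t$, so composition factors of $\Delta_n(\mu)$ can perfectly well lie in $\Lambda_n^n$ and be annihilated by $e_n$. (A two-vertex path algebra already exhibits this.) The conclusion you want is nevertheless true, but for a different reason: use the adjunction $(\mathcal{G}_{n-2},\mathcal{F}_n)$ together with the identities $\mathcal{G}_{n-2}(\Delta_{n-2}(\lambda))\cong\Delta_n(\lambda)$ and $\mathcal{F}_n(\Delta_n(\mu))\cong\Delta_{n-2}(\mu)$ recorded just before axiom~(A3). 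This gives
\[
\Hom_{A_n}\bigl(\Delta_n(\lambda),\Delta_n(\mu)\bigr)
\cong \Hom_{A_{n-2}}\bigl(\Delta_{n-2}(\lambda),\mathcal{F}_n\Delta_n(\mu)\bigr)
\cong \Hom_{A_{n-2}}\bigl(\Delta_{n-2}(\lambda),\Delta_{n-2}(\mu)\bigr),
\]
and your induction then goes through.

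Your argument for part~(ii) is sound. In the case $m=s-2$ you phrase it as a support-overlap calculation, but in fact the hypothesis of~(ii) applies directly there (you have $\nu\in\Lambda_s^s$ and $\lambda\in\Lambda_s^{s-2}$), so the restriction step is only needed when $m<s-2$. Your observation that~(A6) is not used in this direction is correct.
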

\subsection{Algebra $J_{l,n}(\delta)$ satisfies the axioms (A1) to (A4)} \label{second section in ch 4}
In the following we can see that $J_{l,n}(\delta)$ satisfies the axioms (A1) to (A4).
Firstly we define the idempotent $e_n$ in algebra $J_{l,n}(\delta).$
\begin{defn}
For $n\geq2$ and $\delta\neq 0,$ we define the idempotent element $e_n\in J_{l,n}(\delta)$ to be the diagram
\begin{center}
 \begin{tikzpicture}
    \draw (0,0)--(0,1.5) node at (-1.9,.75){$e_n ={\frac{1}{\delta}}(1_{n-2}\otimes U )={\frac{1}{\delta}}$};
    \draw (0,1.5)--(4,1.5);
    \draw (4,1.5)--(4,0) node at (1.75,.75){$\cdots$};
    \draw (0,0)--(4,0) ;
\draw (.5,1.5)--(.5,0);
\draw (1,1.5)--(1,0);
\draw  (2.5,1.5)--(2.5,0);
\draw (3,1.5)..controls(3,1.1) and (3.5,1.1)..(3.5,1.5) ;
\draw (3,0)..controls(3,.4) and (3.5,.4)..(3.5,0);
\draw [decorate,decoration={brace,amplitude=5pt,mirror,raise=.5ex}]
  (.5,-.1) -- (2.5,-.1) node[midway,yshift=-1em]{$n-2$};
  \end{tikzpicture}
\end{center}
where  there are $n-2$ propagating lines in the diagram for $e_n$, $U=\begin{tikzpicture}
\draw (3.5,.8)..controls(3.5,.4) and (4,.4)..(4,.8) ;
\draw (3.5,0)..controls(3.5,.4) and (4,.4)..(4,0);
  \end{tikzpicture}$ and $\otimes$ denotes side-by-side concatenation.
\end{defn}
Now we prove that the algebra $J_{l,n}(\delta)$ satisfies all the
axioms in the CMPX framework when our base field is the complex
numbers and $\delta \ne 0$.
\begin{propn}[{\cite[Proposition 4.1]{kadar}}]\label{axiom1}
For $n>2$, we have $$
                     J_{l,n-2}\cong e_nJ_{l,n}e_n
                   $$
                   so $J_{l,n}(\delta)$ satisfies axiom (A1).
 \end{propn}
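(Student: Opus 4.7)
The plan is to construct the isomorphism explicitly at the level of diagrams. Define $\Phi_n\colon J_{l,n-2}(\delta)\to e_nJ_{l,n}(\delta)e_n$ on a basis diagram $d$ by
$$
\Phi_n(d) \;=\; \tfrac{1}{\delta}\,(d\otimes U),
$$
where $d\otimes U$ is $d$ with a disjoint cap-cup adjoined at positions $n-1, n$, and extend linearly. Since the adjoined $U$ is disjoint from $d$, no new crossings appear and no new alcoves are created inside $d$, so $h(d\otimes U)=h(d)\le l$ and $d\otimes U\in J_{l,n}(\delta)$. A direct stacking computation gives $(1_{n-2}\otimes U)(d\otimes U)(1_{n-2}\otimes U)=\delta^2(d\otimes U)$, since each of the two $U$-on-$U$ encounters closes one loop, and therefore $e_n\Phi_n(d)e_n=\Phi_n(d)$, so the image lies in the corner algebra.

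I would then verify that $\Phi_n$ is a unital algebra homomorphism. Unitality is $\Phi_n(1_{n-2})=e_n$, which is the multiplicative identity of $e_nJ_{l,n}(\delta)e_n$, and multiplicativity reduces to
$$
\Phi_n(d_1)\Phi_n(d_2) \;=\; \tfrac{1}{\delta^2}\bigl(d_1d_2\otimes U^2\bigr) \;=\; \tfrac{1}{\delta^2}\bigl(d_1d_2\otimes \delta U\bigr) \;=\; \Phi_n(d_1d_2).
$$
Injectivity is immediate, since distinct basis diagrams $d$ of $J_{l,n-2}(\delta)$ produce distinct basis diagrams $d\otimes U$ in $J_{l,n}(\delta)$.

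The main obstacle is surjectivity. The key observation is that because $e_n$ carries a cap at the top and a cup at the bottom between positions $n-1$ and $n$, any sandwich $e_n x e_n$ for $x$ a basis diagram of $J_{l,n}(\delta)$ is forced to have arcs at its outer top and bottom between positions $n-1, n$, and hence must take the form $c\,(d'\otimes U)$ for some Brauer diagram $d'$ on $n-2$ strands and a scalar $c\in K^\times$ recording loops created during the two stackings. A short case analysis on the configuration of $x$ at the vertices $n-1, n$ (whether they form top or bottom arcs, through-strands, or link into the first $n-2$ vertices) pins down $d'$ and $c$ in each case. Since $e_nxe_n\in J_{l,n}(\delta)$ and the adjoining-$U$ construction preserves height, we obtain $h(d')\le l$, so $d'\in J_{l,n-2}(\delta)$ and $e_nxe_n$ lies in the image of $\Phi_n$. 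Alternatively, since the statement coincides with \cite[Proposition 4.1]{kadar}, one may simply quote that reference.
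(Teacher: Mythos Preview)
Your argument is correct. Note, however, that the paper itself gives no proof of this proposition: it is stated with attribution to \cite[Proposition~4.1]{kadar} and simply quoted, exactly as you suggest in your final sentence. So there is no ``paper's own proof'' to compare against beyond the citation; your explicit construction of $\Phi_n$ and the verification of the homomorphism, injectivity, and surjectivity supplies the details that the paper elects to outsource to the reference.
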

 For $n\geq 2t$, where $t\in \mathbb{N}$, we define
$e_{n,t}=1_{n-2t}\otimes U^{\otimes t}$ and when $\delta \in
\mathbb{C}^{\times}$ we define $e_{n,t}^\prime=\delta^{-t}e_{n,t}.$ We
can see that $e_{n,t}\in J_{-1}(n, n-2t, n),$ i.e. $e_{n,t}$ has
no crossings and is a Temperley-Lieb diagram.
\begin{thm}[{\cite[Theorem 4.12]{kadar}}]
  If $\delta$ invertible in $\mathbb{C}$ then $J_{l,n}$ is
  quasi-hereditary, with heredity chain given by $(1, e_{n,1}^\prime,
  e_{n,2}^\prime, \ldots)$. So $J_{l,n}(\delta)$ satisfies axiom (A2).
\end{thm}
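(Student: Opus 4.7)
Theorem~4.12 of \cite{kadar} gives the quasi-heredity of $J_{l,n}(\delta)$ and exhibits the claimed heredity chain, so the work here is to check that this chain fits the form required by (A2) and that the resulting standard modules satisfy the prescribed composition-factor ordering condition.

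First I would identify $J_{l,n}(\delta)\,e_{n,i}'\,J_{l,n}(\delta) = I_{n-2i}$. The idempotent $e_{n,i}'$ is, up to the invertible scalar $\delta^{-i}$, a Temperley--Lieb diagram of height $-1$ with exactly $n-2i$ propagating lines. Sub-multiplicativity of the propagating number (established in \cite{kadar}) gives the inclusion $J_{l,n}(\delta)\,e_{n,i}'\,J_{l,n}(\delta) \subseteq I_{n-2i}$. For the reverse inclusion, any diagram $d \in I_{n-2i}$ can be written as $d = u\,e_{n,i}'\,v$, by choosing $u$ and $v$ to glue the top and bottom halves of $d$ to $e_{n,i}'$; the scalar $\delta^{-i}$ absorbs the loops that appear in the product. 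The chain $0 \subset \cdots \subset J_{l,n}(\delta)\,e_{n,i}'\,J_{l,n}(\delta) \subset \cdots \subset J_{l,n}(\delta)$ therefore coincides with the propagating filtration, which is the shape required by (A2).

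Next I would combine Theorem~\ref{thm:iterate} with the fact that $\C\sym_k$ is semisimple for every $k$. Each section $I_{n-2i}/I_{n-2i-2}$ is an inflation of $\C\sym_{(n-2i)_l}$, and since $(n-2i)_l \le l+2$ and $\C\sym_{(n-2i)_l}$ is semisimple, this section is a heredity ideal in $J_{l,n}(\delta)/I_{n-2i-2}$ (this is the standard Koenig--Xi characterisation of heredity via inflations of semisimple algebras, and is also what powers the KMY heredity proof). The associated standard module $\Delta_n(\lambda)$ for $\lambda \in \Lambda_n^{n-2i}$ is then the cell module $\Delta_{l,n}(n-2i, \lambda)$ constructed at the end of Section~\ref{review}. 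For the ordering condition: for $\lambda \in \Lambda_n^r$ the module $\Delta_n(\lambda)$ is naturally a module for the quotient $J_{l,n}(\delta)/I_{r-2}$, so all its composition factors $L_n(\mu)$ satisfy $\mu \in \Lambda_n^s$ with $s \ge r$. In the CMPX order this reads $\mu \le \lambda$, with equality only for the head $L_n(\lambda)$, giving exactly the "other composition factors $\mu < \lambda$" clause. Finally, when $\lambda \in \Lambda_n^n$, the top quotient $J_{l,n}(\delta)/I_{n-2} \cong \C\sym_{n_l}$ is semisimple, so its simples lift to simples of $J_{l,n}(\delta)$ and $\Delta_n(\lambda) = L_n(\lambda)$ automatically.

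The main obstacle I would expect is the ideal identification $J_{l,n}(\delta)\,e_{n,i}'\,J_{l,n}(\delta) = I_{n-2i}$: while for the Brauer algebra the factorisation $d = u\,e_{n,i}'\,v$ is routine, here one must check that the halves $u$ and $v$ can both be chosen of height at most $l$, so that neither escapes $J_{l,n}(\delta)$. This relies on the sub-multiplicativity of height from \cite{kadar} together with the observation that $e_{n,i}'$ itself is a Temperley--Lieb diagram (height $-1$), so inserting it cannot increase the height of the surrounding halves beyond what is already present in $d$.
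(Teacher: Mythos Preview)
The paper does not supply a proof for this statement at all: it is stated as a direct citation of \cite[Theorem~4.12]{kadar}, with the clause ``So $J_{l,n}(\delta)$ satisfies axiom (A2)'' appended as an immediate consequence, and no further argument is given. Your proposal therefore goes considerably beyond what the paper itself does---you are effectively sketching a proof of the cited result and then verifying the CMPX compatibility, whereas the paper simply invokes the reference.

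That said, your sketch is a reasonable outline of how one would establish the result independently. The identification $J_{l,n}(\delta)\,e_{n,i}'\,J_{l,n}(\delta) = I_{n-2i}$ and the use of the iterated inflation structure together with semisimplicity of $\C\sym_{(n-2i)_l}$ is indeed the mechanism behind the KMY proof. The obstacle you flag---ensuring the factors $u,v$ in $d = u\,e_{n,i}'\,v$ stay within $J_{l,n}(\delta)$---is genuine and is handled in \cite{kadar} via the half-diagram decomposition and the height sub-multiplicativity; your remark that $e_{n,i}'$ has height $-1$ is the right observation but does not by itself close the gap, since one still needs that the half-diagrams of $d$ (which carry all the height) can be completed to full diagrams of height at most $l$. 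This is Proposition~4.10 in \cite{kadar}, and is not entirely trivial.
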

\begin{thm} \label{Theorem of AxiomA3}
For all $n\geq1$, the algebra $J_{l,n}(\delta)$ is a subalgebra of
$J_{l,n+1}(\delta)$. So $J_{l,n}(\delta)$ satisfies axiom (A3).
\end{thm}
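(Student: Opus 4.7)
The plan is to define the embedding $\iota: J_{l,n}(\delta) \hookrightarrow J_{l,n+1}(\delta)$ by sending a Brauer diagram $d$ on $n$ strands to the diagram $\iota(d)$ obtained by placing a single propagating line joining vertex $n+1$ on the top to vertex $(n+1)'$ on the bottom, immediately to the right of $d$, and then extending by linearity. This is the natural embedding $\Br_n \hookrightarrow \Br_{n+1}$ restricted to $J_{l,n}(\delta)$, so the main content is checking that (i) the image lies in $J_{l,n+1}(\delta)$, (ii) $\iota$ is an algebra homomorphism, and (iii) $\iota$ is injective.

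For (i), I would argue that $h(\iota(d)) = h(d)$. Take a planar representative of $d$ realising its height and adjoin a straight vertical strand on the right; this is a planar representative of $\iota(d)$. All crossings of $\iota(d)$ already occur in $d$, and the alcove numbering starts from the leftmost alcove and increases on crossing a strand, so the alcove labels attached to each crossing in $\iota(d)$ coincide with those in $d$. The only new alcove is the strip to the right of the adjoined propagating line, which contains no crossings. Hence $h(\iota(d)) \le h(d) \le l$, so $\iota(d) \in J_{l,n+1}(\delta)$. For (ii), the stacking of $\iota(d_1)$ on top of $\iota(d_2)$ joins the two rightmost propagating strands into a single propagating strand, with no loops produced in that rightmost region, while the remaining part composes exactly as $d_1 d_2$ does (with the same powers of $\delta$ recording the loops that are removed). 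Thus $\iota(d_1 d_2) = \iota(d_1)\iota(d_2)$, and $\iota(1_n) = 1_{n+1}$. For (iii), we recover $d$ from $\iota(d)$ by deleting the rightmost strand, so distinct basis diagrams have distinct images, giving injectivity.

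The only genuinely non-routine step is (i): everything else is immediate from the corresponding statement for $\Br_n \hookrightarrow \Br_{n+1}$. The height check itself is really just an observation about the alcove labelling, once one notes that heights are defined as the minimum over planar representatives. Since adjoining a vertical right strand changes neither the crossing data nor the alcove numbering to the left of that strand, the height of $\iota(d)$ equals that of $d$, and axiom (A3) follows.
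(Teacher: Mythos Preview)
Your proof is correct and follows essentially the same approach as the paper: define the embedding by adjoining a vertical strand on the right, check it is a homomorphism and injective on diagrams. In fact you are more careful than the paper, which does not explicitly verify that the image lies in $J_{l,n+1}(\delta)$; your height argument in (i) fills that small gap.
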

\begin{proof}
  We define an inclusion map
  $i_n:J_{l,n}(\delta)\xhookrightarrow{}J_{l,n+1}(\delta)$ by adding a
  vertical line to the right of the diagram $d\in J_{l,n}(\delta)$ and
  then extended by linearity to all of $J_{l,n}(\delta)$. This
  inclusion sends the identity diagram in $J_{l,n}(\delta)$ to the
  identity in $J_{l,n+1}(\delta)$.

 Assume that $d_1$, $d_2\in J_{l,n}(\delta)$, we have
 $$
  i(d_1d_2)=i(d_1)i(d_2).
 $$
 This means if we multiply two diagrams $d_1$ and $d_2$, then add a
 vertical line on the right of the multiplication $d_1d_2$ is the same
 when we add a vertical line on the right of $d_1$ and $d_2$ then
 multiply them. Therefore the inclusion map $i$ is a
 homomorphism. Since different diagrams in $J_{l,n}(\delta)$ have
 different images in $J_{l,n+1}(\delta)$, the map $i$ is injective.
\end{proof}
\begin{propn}[{\cite[Lemma 6.2]{kadar}}]  For all $n\geq 1$ we have
  $J_{l,n}e_n\cong J_{l,n-1}$ as a left $J_{l.n-1}$-, right
  $J_{l,n-2}$-bimodule. So $J_{l,n}(\delta)$ satisfies axiom (A4).
\end{propn}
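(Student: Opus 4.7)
The plan is to build an explicit bimodule isomorphism $\psi\colon J_{l,n-1}(\delta)\to J_{l,n}(\delta)\,e_n$. On a basis diagram $d' \in J_{l,n-1}(\delta)$, I would set $\psi(d') := i_{n-1}(d')\cdot e_n$, where $i_{n-1}$ is the inclusion from Theorem~\ref{Theorem of AxiomA3}. Tracing the lines through the stack of $i_{n-1}(d')$ over $e_n = \delta^{-1}(1_{n-2}\otimes U)$ shows that no closed loop is produced, so $\psi(d') = \delta^{-1} D_{d'}$, where $D_{d'}$ is the diagram obtained from $d'$ by (i) adding a cap on the two new bottom vertices $n-1,n$ and (ii) rerouting the endpoint of $d'$ sitting at bottom vertex $n-1$ so that its free end becomes the top vertex $n$ of $D_{d'}$.

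To check that $\psi$ lands in $J_{l,n}(\delta)\,e_n$, I would argue that step (ii) can be carried out by pulling the relevant arc around the right boundary of the frame, introducing no new crossings; hence $h(D_{d'}) \le h(d') \le l$. For injectivity and surjectivity I would construct the inverse $\phi\colon J_{l,n}(\delta)\,e_n \to J_{l,n-1}(\delta)$: every basis element of $J_{l,n}(\delta)\,e_n$ is forced to have bottom $n-1,n$ joined by a cap, and $\phi$ deletes the $n$-th column and attaches the former partner of top vertex $n$ to the new bottom vertex $n-1$. The same boundary argument shows the height does not grow, and one verifies $\phi\psi = \id$ and $\psi\phi = \id$ on basis diagrams.

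For the bimodule structure, the left $J_{l,n-1}(\delta)$-compatibility is immediate since $i_{n-1}$ is an algebra embedding: $\psi(a\,d') = i_{n-1}(a)\,i_{n-1}(d')\,e_n = i_{n-1}(a)\,\psi(d')$. For the right $J_{l,n-2}(\delta)$-compatibility I would first verify the identity $\Phi_n(b) = i_{n-1}(i_{n-2}(b))\cdot e_n$ by direct diagrammatic computation (both sides equal $\delta^{-1}(b \otimes U)$ after the loop produced by $U^{2} = \delta U$ is absorbed), and then use $e_n\,\Phi_n(b) = \Phi_n(b)$, from which
$$
\psi(d'\cdot i_{n-2}(b)) \;=\; i_{n-1}(d')\,\Phi_n(b) \;=\; \psi(d')\cdot \Phi_n(b)
$$
follows. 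The main obstacle is the height preservation in step (ii) of $\psi$ (and dually for $\phi$): while the pictorial "push around the boundary" is transparent, the formal justification ultimately rests on the alcove-labelling machinery of \cite{kadar}, which confirms that rerouting a single arc around the frame cannot raise the height label of any existing crossing.
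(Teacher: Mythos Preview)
The paper does not supply its own proof of this proposition; it simply cites \cite[Lemma~6.2]{kadar}. Your proposal is the standard diagrammatic argument (the same one used for the Brauer algebra in \cite{cox2006representation}), and it is essentially correct.

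Two small remarks. First, you are working harder than necessary to show that $\psi$ lands in $J_{l,n}e_n$ with the right height bound: since $i_{n-1}(d')\in J_{l,n}$ and $e_n\in J_{l,n}$, the product $i_{n-1}(d')e_n$ lies in $J_{l,n}$ automatically by closure under multiplication, so no ``push around the boundary'' argument is needed in this direction. It is only for the inverse $\phi$ that the height check genuinely requires the alcove-labelling analysis from \cite{kadar}, as you correctly flag. Second, your parenthetical about ``the loop produced by $U^2=\delta U$'' in verifying $\Phi_n(b)=i_{n-1}(i_{n-2}(b))\cdot e_n$ is a slip: the computation $(b\otimes 1_2)\cdot\delta^{-1}(1_{n-2}\otimes U)=\delta^{-1}(b\otimes U)$ produces no closed loop at all, since $1_2$ stacked on $U$ is just $U$. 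The identity still holds, so this does not affect the argument.
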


\subsection{The localisation and globalisation functors}\label{third section in ch4}

We use the previously definied idempotent $e_n$ to define functors.
The \emph{localisation} functor is given by
\begin{align*}
\mathcal{F}_n:J_{l,n}\text{-}\Mod&\rightarrow J_{l,n-2}\text{-}\Mod \\
M&\mapsto e_nM.
\end{align*}
The \emph{globalisation} functor is given by
\begin{align*}
\mathcal{G}_n:J_{l,n}\text{-}\Mod&\rightarrow J_{l,n+2}\text{-}\Mod\\
M&\mapsto J_{l,n}e_n\otimes_{e_nJ_{l,n}e_n}M.
\end{align*}

\subsection{Restriction of the cell module $\Delta_{l,n}(p,\lambda)$} \label{fourth section in ch4}

The restriction of $\Delta_{l,n}(p,\lambda)$ to $J_{l,n-1}(\delta)$
via the embedding $i_{n-1}$ in (A3) is denoted by $\Delta_{l,n}(p,\lambda)\downarrow$.
In the following we present restriction rules for the cell modules in
the algebra $J_{l,n}(\delta)$.
\begin{thm}[{\cite[Proposition 5.3]{kadar}}]\label{Proposition 5.3}
 Consider $i_{n-1}:J_{l,n-1}(\delta)\hookrightarrow J_{l,n}(\delta)$,
 denote the corresponding restriction of $J_{l,n}(\delta)$ to a
 $J_{l,n-1}(\delta)$ by $\Delta_{l,n}(p,\lambda)\downarrow$. Then we
 have for fixed $l$ the following exact sequences for the restriction
\begin{enumerate}
  \item[(i)] For $p<l+2$ we have a short exact sequence
  $$
    0\rightarrow\bigoplus_i\Delta_{l,n-1}(p-1,\lambda
    -e_i)\rightarrow\Delta_{l,n}(p,\lambda)\downarrow\rightarrow\bigoplus_i\Delta_{l,n-1}{(p+1,\lambda+e_i)}
    \rightarrow 0
  $$
  where the sums are over addable/removable boxes of the Young diagram
  and $\lambda-e_i$ denotes $\lambda$ with the $i$-th removable box
  removed and so on.
 \item[(ii)] For $p=l+2$
 $$
 0\rightarrow\bigoplus_i\Delta_{l,n-1}(p-1,\lambda
 -e_i)\rightarrow\Delta_{l,n}(p,\lambda)\downarrow
 \rightarrow\Delta_{l,n-1}{(p+1,\lambda)}\rightarrow
 0
 $$
 where the sums are over removable boxes of the Young diagram and
 $\lambda-e_i$ denotes $\lambda$ with the $i$-th removable box
 removed and so on.
 \item[(iii)] For $p>l+2$ $$0\rightarrow\Delta_{l,n-1}(p-1,\lambda)
      \rightarrow\Delta_{l,n}(p,\lambda)\downarrow\rightarrow\Delta_{l,n-1}{(p+1,\lambda)}\rightarrow0$$
\end{enumerate}
\end{thm}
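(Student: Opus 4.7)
The plan is to filter $\Delta_{l,n}(p,\lambda)\downarrow$ according to the r\^ole of the rightmost top vertex $n$ in each half-diagram basis element: either $n$ is the top of a propagating line, or it is the endpoint of an arc reaching some $j<n$. Via the embedding $i_{n-1}$, an element $a\in J_{l,n-1}(\delta)$ acts as $a\otimes|$, so the vertical strand at position $n$ is preserved by the action. Consequently, the span $M_{\mathrm{prop}}$ of basis elements in which vertex $n$ is propagating is a $J_{l,n-1}(\delta)$-submodule of $\Delta_{l,n}(p,\lambda)\downarrow$, while the complementary span of ``arc-at-$n$'' basis elements only forms a module after passing to the quotient by $M_{\mathrm{prop}}$: tracing through an arc at $n$ and down a propagating line of the underlying half-diagram can convert an arc at $n$ into a propagating line.

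First I would identify $M_{\mathrm{prop}}$ with the left-hand term of the exact sequence. Deleting vertex $n$ from each such half-diagram yields a bijection with half-diagrams on $n-1$ vertices having $p-1$ propagating lines; heights do not increase, since the straight vertical strand being removed carries no crossings. The Specht-module factor is then redistributed by the classical branching rule $S^\lambda\downarrow^{\sym_m}_{\sym_{m-1}}\cong\bigoplus_i S^{\lambda-e_i}$ (over removable boxes of $\lambda$), which over $\C$ is a genuine direct sum. With $m=\min\{p,l+2\}$, cases (i) and (ii) give $m=p$ and hence the stated direct sum $\bigoplus_i\Delta_{l,n-1}(p-1,\lambda-e_i)$; in case (iii), $m=l+2=\min\{p-1,l+2\}$, so $\lambda$ is unchanged and one obtains the single term $\Delta_{l,n-1}(p-1,\lambda)$.

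For the quotient $\Delta_{l,n}(p,\lambda)\downarrow/M_{\mathrm{prop}}$ I would send each ``arc-at-$n$'' basis element to the half-diagram on $n-1$ vertices obtained by removing vertex $n$ and promoting its arc-partner $j$ to a propagating line, tagged by the choice of distinguished new propagating strand at position $j$. On the Specht side, in case (i) induction from $\sym_p$ to $\sym_{p+1}$ gives $S^\lambda\uparrow^{\sym_{p+1}}_{\sym_p}\cong\bigoplus_i S^{\lambda+e_i}$ over addable boxes, and the multiplicity $p+1$ arising from the distinguished-strand tag is absorbed by the dimension identity $\sum_i\dim S^{\lambda+e_i}=(p+1)\dim S^\lambda$; this yields the stated direct sum. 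In cases (ii) and (iii), $\min\{p+1,l+2\}=l+2$ is unchanged, so the Specht module remains $S^\lambda$ and the quotient is the single summand $\Delta_{l,n-1}(p+1,\lambda)$.

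The main obstacle is $J_{l,n-1}(\delta)$-equivariance of the quotient map. One must check that when $a\otimes|$ reroutes the arc-partner $j$ through the diagram (possibly via several bottom arcs of $a$ and top arcs of the half-diagram), the resulting action on the enlarged $(p+1)$-propagating-line half-diagram, together with the implicit permutations of propagating strands, matches the symmetric-group action on $S^\lambda\uparrow^{\sym_{p+1}}$ under the induction isomorphism. Equivariance for the submodule side is more elementary, since the strand at position $n$ is literally preserved by the action. The height bound $\le l$ is maintained throughout, as deletion of a strand and conversion of an arc to a straight propagating line never introduce new crossings.
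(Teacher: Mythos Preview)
The paper does not give its own proof of this statement: it is quoted verbatim from \cite[Proposition~5.3]{kadar} and used as a black box to verify axioms (A5) and (A6). There is therefore no in-paper argument to compare against.

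That said, your outline is the standard route for such restriction rules in diagram algebras (Brauer, partition, blob), and is almost certainly what \cite{kadar} does. The filtration by whether vertex $n$ is propagating or an arc endpoint, the identification of $M_{\mathrm{prop}}$ with the $(p-1)$-term via strand deletion plus the branching rule on the Specht factor, and the case split according to whether $p$ lies below, at, or above $l+2$ are all correct. One small point: your claim that converting the arc $(j,n)$ to a propagating line ``never introduces new crossings'' deserves a sentence --- the clean justification is that $n$ is the rightmost vertex, so the arc can be continued by a vertical segment dropped from $n$ without meeting anything, giving a drawing of the new propagating strand at $j$ with exactly the crossings of the old arc.

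Two places merit more care. In case~(i), the dimension identity $\sum_i\dim S^{\lambda+e_i}=(p{+}1)\dim S^\lambda$ is bookkeeping, not an isomorphism; you should say explicitly that the ``distinguished new strand'' data realises the coset representatives in $\Ind_{\sym_p}^{\sym_{p+1}}S^\lambda$, which is what makes the quotient map a $\sym_{p+1}$-map and hence $J_{l,n-1}$-equivariant. In cases~(ii) and~(iii) you assert that the Specht factor is unchanged, but the new propagating line at position $j$ need not be to the right of the first $l+2$ strands, so a priori it could disturb the $\sym_{l+2}$-labelling. Here the height constraint does real work: any permutation arising from the action lies in $\sym_{l+2}$ on the leftmost $l+2$ propagating strands, and one checks that after straightening the new strand this action is intertwined with the original one on $S^\lambda$. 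This is precisely the ``main obstacle'' you flag, and it is where the argument is genuinely specific to $J_{l,n}$ rather than to the full Brauer algebra.
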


\subsection{Proof of the axioms (A5) and (A6)}\label{fifth section in ch4}
\begin{propn} \label{proposition of axiom A5}
For each $(p,\lambda)\in \Lambda_{n,p_l}^p$ we have that
$\Delta_{l,n}(p,\lambda)\downarrow$ has a $\Delta$-filtration and
       $$
         \supp(\Delta_{l,n}(p,\lambda)\downarrow)\subseteq \Lambda_{{n-1,p_l}}^{p-1}\sqcup \Lambda_{{n-1,p_l}}^{p+1}.
         $$ So the algebra $J_{l,n}(\delta)$ satisfies axiom (A5).
 \end{propn}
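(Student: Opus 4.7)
The plan is to read axiom (A5) directly off the restriction theorem (Theorem~\ref{Proposition 5.3}). In each of the three cases ($p<l+2$, $p=l+2$, $p>l+2$) that theorem supplies a short exact sequence
$$
0\rightarrow N\rightarrow \Delta_{l,n}(p,\lambda)\downarrow\rightarrow Q\rightarrow 0
$$
whose outer terms $N$ and $Q$ are (possibly empty) direct sums of cell modules for $J_{l,n-1}(\delta)$. Since any direct sum of standard modules carries an obvious $\Delta$-filtration just by ordering the summands, splicing such filtrations for $N$ and $Q$ into the two-step filtration $0\subset N\subset \Delta_{l,n}(p,\lambda)\downarrow$ produces a $\Delta$-filtration of the restriction. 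The existence of a $\Delta$-filtration is therefore immediate from Theorem~\ref{Proposition 5.3}.

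For the support claim the only thing to check is that every label appearing in $N$ or $Q$ has first coordinate $p-1$ or $p+1$, and that its second coordinate is a partition of the correct size $\min\{p\pm 1,l+2\}$ that is required for a valid weight in $\Lambda_{n-1,p_l}^{p\pm1}$. This is a short case-by-case verification. For $p<l+2$ the labels $(p-1,\lambda-e_i)$ and $(p+1,\lambda+e_i)$ have partitions of sizes $p-1$ and $p+1$, and these equal $\min\{p\pm 1,l+2\}$ because $p+1\le l+2$. For $p=l+2$ the removable-box labels have partitions of size $l+1=\min\{p-1,l+2\}$ and the single quotient label is $(p+1,\lambda)$ with $\lambda\vdash l+2=\min\{p+1,l+2\}$. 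For $p>l+2$ both outer terms are single standard modules $(p\pm 1,\lambda)$ with $\lambda\vdash l+2=\min\{p\pm 1,l+2\}$. In every case the first coordinate is $p-1$ or $p+1$ as required.

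All the genuine work lives in Theorem~\ref{Proposition 5.3} (proved in \cite{kadar}); given that, axiom (A5) is a formal reorganisation of the restriction rules and I do not expect any substantive obstacle. The only mild care needed is in the boundary case $p=l+2$, where the quotient is a single standard module rather than a sum over addable boxes, but this actually simplifies the check of the support condition rather than complicating it.
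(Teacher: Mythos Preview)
Your proposal is correct and follows essentially the same approach as the paper: both read the result directly off Theorem~\ref{Proposition 5.3} by checking the three cases $p<l+2$, $p=l+2$, $p>l+2$. You add the small extra observation that a short exact sequence whose outer terms are direct sums of standard modules yields a $\Delta$-filtration, and you verify the partition sizes explicitly, but this is just spelling out what the paper leaves implicit.
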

       \begin{proof}
Let $(p,\lambda)\in \Lambda_{n,p_l}^p.$ By applying Theorem \ref{Proposition 5.3} we have\\
Case 1: If $p<l+2,$ we have \begin{align*}
\begin{split}
   \supp(\Delta_{l,n}(p,\lambda)\downarrow)&=\bigcup_i\{(p-1,\lambda-e_i)\}\cup
   \bigcup_i \{(p+1,\lambda+e_i)\} \\
     &\subseteq \Lambda_{{n-1,p_l}}^{p-1}\sqcup \Lambda_{{n-1,p_l}}^{p+1}
\end{split}
                            \end{align*}
Case 2: If $p=l+2,$ we have
\begin{align*}
\begin{split}
   \supp(\Delta_{l,n}(p,\lambda)\downarrow)&= \bigcup_i\{(p-1,\lambda-e_i)\}\cup \{(p+1,\lambda)\} \\
            &\subseteq \Lambda_{{n-1,p_l}}^{p-1}\sqcup \Lambda_{{n-1,p_l}}^{p+1} \end{split}
                            \end{align*}
Case 3: If $p>l+2,$ we have \begin{align*}\renewcommand{\qedsymbol}{$\square$}
\begin{split}
   \supp(\Delta_{l,n}(p,\lambda)\downarrow)&= \{(p-1,\lambda)\}\cup \{(p+1,\lambda)\} \\
            &\subseteq \Lambda_{{n-1,p_l}}^{p-1}\sqcup \Lambda_{{n-1,p_l}}^{p+1}\qedhere \end{split}
                            \end{align*}
 \end{proof}
\begin{propn}
For each $(p,\lambda)\in \Lambda_{n,p_l}^p$ there exists $(p^\prime,\mu)\in\Lambda_{n+1,p_l^\prime}^{p^\prime-1}$ such that
$$
  (p,\lambda)\in\supp(\Delta_{l,n+1}(p^\prime,\mu)\downarrow).
  $$
  I.e., the algebra $J_{l,n}(\delta)$ satisfies axiom (A6).
\end{propn}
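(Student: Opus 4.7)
The plan is to choose an explicit witness $(p',\mu)$ with $p' = p+1$ and $\mu$ obtained from $\lambda$ by either adding a box or leaving it unchanged, then to read off the required membership directly from Theorem~\ref{Proposition 5.3}. Indeed, for any valid label $(p+1,\mu)$ of $J_{l,n+1}$ the restriction $\Delta_{l,n+1}(p+1,\mu){\downarrow}$ always produces a standard module at propagating number $p = (p+1)-1$ in its support, so the only thing to arrange is that this particular standard module carries the partition $\lambda$.

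I would split according to the same trichotomy that appears in Theorem~\ref{Proposition 5.3}. If $p \le l$, so $p+1 < l+2$, pick any addable box $e_j$ of $\lambda$ and set $\mu = \lambda + e_j$; then $\mu \vdash p+1 = \min(p+1,l+2)$, so $(p+1,\mu)$ is a legitimate label of $J_{l,n+1}$, and part~(i) of Theorem~\ref{Proposition 5.3} places $(p,\mu-e_j) = (p,\lambda)$ in the support. If $p = l+1$ the same choice $\mu = \lambda + e_j$ works, now with $\mu \vdash l+2$, via part~(ii) of Theorem~\ref{Proposition 5.3}. Finally, if $p \ge l+2$ then $\lambda$ is already a partition of $l+2$, and taking $\mu = \lambda$ succeeds by part~(iii), since that support contains $(p,\mu) = (p,\lambda)$.

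The side conditions are automatic in every case: $p+1$ has the correct parity to be admissible in $J_{l,n+1}$ because $p \equiv n \pmod 2$; the bound $p+1 \le n+1$ holds since $p \le n$; and every partition (including $\emptyset$) admits at least one addable box, so the prescription in the first two cases never fails. There is no genuine obstacle here: the proposition is essentially a corollary of Theorem~\ref{Proposition 5.3}, and the work amounts to checking that the witness $(p',\mu)$ we write down lies in the appropriate label set in each of the three regimes determined by the position of $p$ relative to $l+2$.
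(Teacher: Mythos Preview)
Your proof is correct and follows the same approach as the paper: set $p'=p+1$, then choose $\mu=\lambda+e_j$ when the label requires a partition of $p+1$ and $\mu=\lambda$ when it requires a partition of $l+2$, reading the result from Theorem~\ref{Proposition 5.3}. The only cosmetic difference is that you organise the three cases by the position of $p'=p+1$ relative to $l+2$ (which aligns directly with the trichotomy in Theorem~\ref{Proposition 5.3}), whereas the paper organises them by the position of $p$ relative to $l+2$; the resulting regrouping merges the paper's cases $p>l+2$ and $p=l+2$ into your single case $p\ge l+2$, and splits the paper's case $p<l+2$ into your $p\le l$ and $p=l+1$.
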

\begin{proof}
Let $(p,\lambda)\in \Lambda_n^p$ and let $p^\prime > p$, we take $p^\prime=p+1.$ We have the following cases:\\
Case 1: If $p>l+2,$ we have $p^\prime=p+1>l+2.$ So $\lambda\vdash l+2$ and $\mu\vdash l+2.$\\
We have $\lambda=\mu$ and $p=p^\prime-1.$
$$(p,\lambda)=(p^\prime -1,\mu).$$
Then we have $(p,\lambda)\in\supp(\Delta_{l,n+1}(p^\prime,\mu)\downarrow).$\\\\
Case 2: If $p=l+2,$ we have $\lambda\vdash p=l+2$ and $\mu\vdash l+2=p.$ So
$$(p,\lambda)=(p^\prime -1,\mu).$$
Therefore we have $(p,\lambda)\in\supp(\Delta_{l,n+1}(p^\prime,\mu)\downarrow).$\\\\
Case 3: If $p<l+2,$ we have $\lambda\vdash p.$
$\lambda$ has an addable node $e_i$, either $\mu\vdash p^\prime, p^\prime<l+2$ or $\mu\vdash p^\prime=l+2.$ \\
$\mu$ has a removable node, let $\mu=\lambda+e_i.$\\
We have $(p,\lambda)=(p^\prime -1, \mu-e_i).$ Then $(p,\lambda)\in\supp(\Delta_{l,n+1}(p^\prime,\mu)\downarrow).$
\end{proof}
The algebra $J_{l,n}(\delta)$ satisfies axioms (A1) to (A6). We
restate Theorem 1.1 (theorem~\ref{thm:thm1.1} in this paper) from \cite{cox2006representation} for the context
of our algebra $J_{l,n}(\delta)$. It is this theorem that will allow
us to prove Theorem~\ref{thm:semisimple}.
\begin{thm} \label{J_{l,n}semisimple}
\begin{enumerate}
  \item[(i)]  For $l=\{-1, \ldots, n-2\}$ and all pairs of
    weight $(p,\lambda)\in \Lambda_{n,p_l}^p$ and $(p^\prime,\mu)\in
    \Lambda_{n,p^\prime_l}^{p^\prime}$ we have
    $$
   \Hom(\Delta_{l,n}(p,\lambda),\Delta_{l,n}(p^\prime,\mu))\cong\begin{cases}
   \Hom(\Delta_{l,p}(p,\lambda),\Delta_{l,p}(p^\prime,\mu)), & \mbox{if } p^\prime\leq p \\
   0, & \mbox{otherwise}.
   \end{cases}
$$
  \item[(ii)]  Suppose that for all $n\geq 0$ and pairs of
    weights $(n,\lambda)\in \Lambda_{n,p_l}^n$ and $(n-2,\mu)\in
    \Lambda_{n,p_l}^{n-2}$ we have $$
\Hom(\Delta_{l,n}(n,\lambda),\Delta_{l,n}(n-2,\mu))=0.
$$
\end{enumerate}
Then each of the algebras $J_{l,n}(\delta)$ is semisimple.
\end{thm}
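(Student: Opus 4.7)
The approach is entirely to invoke Theorem~\ref{thm:thm1.1} (Theorem 1.1 of \cite{cox2006representation}), using the axiomatic verifications already carried out in this section. So the proof is really a two-line argument with a short dictionary between the two notational conventions. First I would remark that Propositions~\ref{axiom1} through the proposition establishing (A6), together with Theorem~\ref{Theorem of AxiomA3} and the bimodule statement (A4), verify axioms (A1)--(A6) for the tower $\{J_{l,n}(\delta)\}_{n\ge 0}$ with idempotents $e_n$, over $K=\C$ with $\delta\ne 0$. Thus the hypotheses of Theorem~\ref{thm:thm1.1} are satisfied.

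Second, I would give the dictionary between the two formulations. In the general CMPX framework, $\Lambda_n = \bigsqcup_m \Lambda_n^m$ is the indexing set for simple $A_n$-modules, and the parameter $m$ labels the filtration layer (the number of propagating lines in our case). In our situation, a cell weight is a pair $(p,\lambda)$ with $p$ the number of propagating lines and $\lambda \vdash \min\{p,l+2\}=p_l$, so $(p,\lambda)\in \Lambda_{n,p_l}^p$ in the notation of section~\ref{fifth section in ch4}. With this identification, part~(i) of Theorem~\ref{thm:thm1.1} reads exactly as part~(i) here (noting that the rôle of $m$ is played by $p$ and of $A_m$ by $J_{l,p}(\delta)$), and the hypothesis and conclusion of part~(ii) likewise transcribe directly.

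Finally, I would write: therefore, both statements (i) and (ii) follow immediately from Theorem~\ref{thm:thm1.1} applied to the tower $J_{l,n}(\delta)$. No real obstacle arises, since all of the work has been done in verifying (A1)--(A6); the only point requiring any care is to make sure the subscript $p_l = \min\{p,l+2\}$ is handled consistently when restating parts~(i) and~(ii), in particular that the three cases ($p<l+2$, $p=l+2$, $p>l+2$) appearing in Theorem~\ref{Proposition 5.3} do not interfere with the $\Delta$-filtration required for (A5), which was already dealt with in Proposition~\ref{proposition of axiom A5}.
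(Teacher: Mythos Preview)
Your proposal is correct and matches the paper's approach exactly: the paper does not give a separate proof of this theorem but simply presents it as a restatement of Theorem~\ref{thm:thm1.1} (Theorem 1.1 of \cite{cox2006representation}) in the context of $J_{l,n}(\delta)$, relying on the preceding verification of axioms (A1)--(A6). Your notational dictionary is the only content needed, and it is accurate.
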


\section{A generating set for the algebra $J_{l,n}(\delta)$}\label{generating setfor J}

In this section we give a generating set for the algebra
$J_{l,n}(\delta)$. We work over the ring (field) of coefficients $\C$
with $\delta$ as a parameter.  However, in this section the ring of
coefficients does not play a strong role, and the result holds more
generally.

The
first type of generator $u_i$, $1\leq i\leq n-1$ are all TemperleyLieb
diagrams (height $-1$) and generate the 
Temperley-Lieb algebra as a subalgebra of $J_{l,n}(\delta)$.
The second type are the crossings $s_m$, where $1\leq m \leq l+1$, and
generate the symmetric group algebra $\C\sym_{m}$  as a subalgebra of $J_{l,n}(\delta)$.
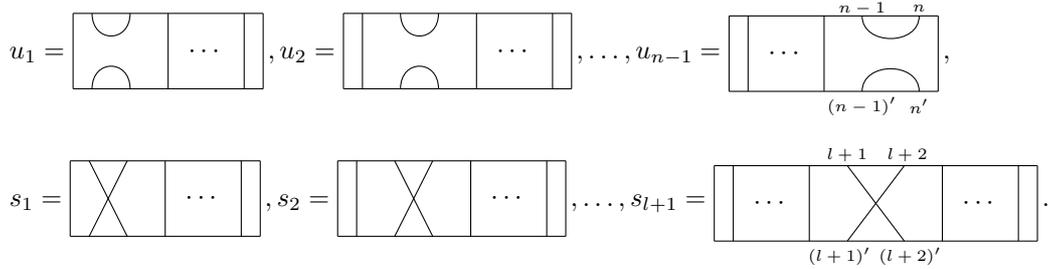
\begin{figure}[!h]
  \begin{align*}
    &u_1=
      \raisebox{-0.4\height}{\begin{tikzpicture}
    \draw (.25,0)--(.25,1) ; 
    \draw (.25,1)--(2.75,1) ; 
    \draw (2.75,1)--(2.75,0) ;
    \draw (.25,0)--(2.75,0) ; 
\draw (.5,1)..controls(.5,.6) and (1,.6)..(1,1) ; 
\draw (.5,0)..controls(.5,.4) and (1,.4)..(1,0); 
\draw (1.5,1)--(1.5,0) node at (2.0,.5){$\cdots$} ; 
\draw (2.5,1)--(2.5,0); 
\end{tikzpicture}}\, , 
    u_2=\raisebox{-0.4\height}{\begin{tikzpicture}
    \draw (.25,0)--(.25,1) ;
    \draw (.25,1)--(3.25,1); 
    \draw (3.25,1)--(3.25,0); 
    \draw (.25,0)--(3.25,0) ; 
    \draw (.5,1)--(.5,0); 
\draw (2.0,0)--(2.0,1) node at (2.5,.5)  {$\cdots$}; 
\draw (1,1)..controls(1,.6) and (1.5,.6)..(1.5,1) ;
\draw (1,0)..controls(1,.4) and (1.5,.4)..(1.5,0); 
\draw (3,1)--(3,0);
\end{tikzpicture}}
      \, , \ldots, 
    u_{n-1}=\raisebox{-0.5\height}{\begin{tikzpicture}
    \draw (.25,0)--(.25,1) ;
    \draw (.25,1)--(3.0,1); 
    \draw (3,1)--(3,0) ; 
    \draw (.25,0)--(3.0,0) ; 
\draw (.5,0)--(.5,1) node at (1,.5){$\cdots$};
\draw (1.5,0)--(1.5,1); 
\draw (2,1)..controls(2,.6) and (2.75,.6)..(2.75,1)
node at (2,-.2){\tiny$(n-1)^\prime$}  node at (2.75,-.2){\tiny$n^\prime$}; 
\draw (2,0)..controls(2,.4) and (2.75,.4)..(2.75,0)
node at (2,1.1){\tiny$n-1$} node at (2.75,1.1){\tiny$n$}; 
\end{tikzpicture}}\, ,
      \\
  &s_1=\raisebox{-0.4\height}{\begin{tikzpicture}
    \draw (.25,0)--(.25,1) ; 
    \draw (.25,1)--(2.75,1) ; 
    \draw (2.75,1)--(2.75,0) ; 
    \draw (.25,0)--(2.75,0) ; 
    \draw (.5,1)--(1.0,0); 
    \draw (1.0,1)--(.5,0); 
\draw (1.5,1)--(1.5,0) node at (2.0,.5){$\cdots$} ; 
\draw (2.5,1)--(2.5,0); 
\end{tikzpicture}}\, ,
    s_2=\raisebox{-0.4\height}{\begin{tikzpicture}
    \draw (.25,0)--(.25,1)  ;
    \draw (.25,1)--(3.25,1); 
    \draw (3.25,1)--(3.25,0) ; 
    \draw (.25,0)--(3.25,0) ; 
    \draw (.5,1)--(.5,0); 
    \draw (1,1)--(1.5,0);
    \draw (1.5,1)--(1,0);
\draw (2.0,0)--(2.0,1) node at (2.5,.5)  {$\cdots$}; 
\draw (3,1)--(3,0);
\end{tikzpicture}}\, , \ldots,
     s_{l+1}=\raisebox{-0.5\height}{\begin{tikzpicture}
    \draw (.25,0)--(.25,1) ; 
    \draw (.25,1)--(4.5,1)  ;
    \draw (4.5,1)--(4.5,0) ;
    \draw (.25,0)--(4.5,0) ;
\draw (.5,0)--(.5,1) node at (1,.5){$\cdots$};
    \draw (1.5,1)--(1.5,0); 
    \draw (2,1)--(2.75,0)  node at (2,1.15){\tiny$l+1$} node at (2.83,-.2){\tiny$(l+2)^\prime$};  
    \draw (2.75,1)--(2,0) node at (1.9,-.2){\tiny$(l+1)^\prime$} node at (2.79,1.15){\tiny$l+2$}; 
      \draw (3.25,1)--(3.25,0) node at (3.75,.5){$\cdots$} ; 
       \draw (4.25,1)--(4.25,0); 
  \end{tikzpicture}}\, .
 \end{align*}
   \caption{The generators for $J_{l,n}(\delta)$ }\label{generators of Jch5}
 \end{figure}

 We introduce some terminology for the diagrams and also a
 ``standard'' picture for a diagram $d$ with $m$ propagating lines.

\begin{defn} 
 We draw a diagram with the least number of total crossings. An arc is
\emph{covering} a crossing if the crossing is contained by the arc. I.e. when
we draw the diagram, the end points of the arc are either side of the
crossing and the crossing is above (respectively below) the arc if it
its a top (bottom) arc respectively. We will also say an arc
\emph{covers}
another arc if the end points of the second arc lie between the end
points of the first arc.
\end{defn}

By ``turning point'' in the following we mean that when the line is
drawn on the plane, a turning point is a point on that line where is there a horizontal
tangent to that line.

   We ``standardise'' the drawing for the diagram $d$ with $m$
  propagating lines and height $l$ as follows:

  Step 1: draw all arcs as ``simpler'' arcs, i.e. the lines only have one stationary point.

Step 2: draw all top arcs so they do not intersect any bottom
arcs.

Step 3: If an arc covers another arc, then we draw $d$ so that these
arcs do not intersect.

Step 4: pull propagating lines ``taut'', i.e. All
propagating lines now have no turning points in them.

Step 5: pull any crossings in propagating lines to the middle of the
diagram so that they are not covered by any arcs.

The diagram $d$ then has a top half-diagram, a permutation on the $m$
propagating lines ($\pi \in\sym_m$) in the middle and a bottom half-diagram.


Step 2 reduces crossings in the arcs, and leaves crossings with
propagating lines the same, so this does not produce any new
crossings. 
This also leaves a region in the middle of the
diagram that touches both sides of the diagram and contains no points
from arcs, although it may contain propagating lines (if $m >0$). (If
$m =0$ then this region is empty.)

In Step 5, 
the crossing number
of this crossing either stays the same or goes down with this
procedure. The total number of crossings in the propagating lines stays the same.

This procedure
either reduces the number of crossings of left-height $l$ in the
picture for the diagram $d$ or leaves it
the same, but it cannot increase it. In particular, it does not create
a crossing with a higher height than $l$.

NB: if standardisation removes all crossings of height $l$ in a
diagram, then that diagram did not have height $l$ to start with, as
we now have a picture for $d$ where all crossings are of height
smaller than $l$.

\begin{thm}\label{thm:genset}
 The algebra $J_{l,n}(\delta)$ with left height $l$ is generated by
 the diagrams $u_i$, $1\leq i\leq n-1$ and $s_m$, $1\leq m \leq l+1$
 with $l=\{0, 1, 2, \ldots,  n-1\}$, as in Figure \ref{generators of
   Jch5}.
\end{thm}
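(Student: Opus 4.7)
The plan is to use the standardisation procedure introduced just before the theorem. Given an arbitrary diagram $d \in J_{l,n}(\delta)$ with $m$ propagating lines, I first standardise its picture, which factorises $d$ as a top half-diagram $h_t$ (arcs at the top, propagating lines descending), a permutation $\pi$ of the $m$ propagating lines in the middle region, and a bottom half-diagram $h_b$. By the standardisation steps, all crossings of $d$ lie in the middle permutation $\pi$, while $h_t$ and $h_b$ contain no crossings.

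The key observation is that $\pi$ must lie in the subgroup of $\sym_m$ permuting only the leftmost $\min\{m, l+2\}$ propagating lines. Indeed, after standardisation only propagating lines are present in the middle region, so a crossing between the $i$-th and $(i+1)$-th propagating lines there has height exactly $i - 1$: the alcove immediately to its left is reached from alcove $0$ along the left frame by crossing precisely the $i - 1$ propagating lines to its left. The height bound $i - 1 \le l$ forces $i \le l + 1$, so $\pi$ is a product of the adjacent transpositions corresponding to $s_1, \ldots, s_{l+1}$.

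I would then factorise $d$ as a product $H_t \cdot P \cdot H_b$ of three full $n$-strand diagrams inside $J_{l,n}(\delta)$. Here $H_t$ extends $h_t$ to a full Temperley--Lieb diagram by appending a canonical noncrossing closure at the bottom (and analogously $H_b$ closes $h_b$ from above); the closures are designed to shuttle the leftmost $\min\{m, l+2\}$ propagating lines to the first $\min\{m, l+2\}$ positions, so that $P$ can be realised as the $n$-strand Brauer permutation diagram applying $\pi$ to the first $\min\{m, l+2\}$ strands and the identity on the rest. Being TL diagrams, both $H_t$ and $H_b$ are products of $u_1, \ldots, u_{n-1}$ by the classical TL generating set; being supported on the first $l+2$ strands, $P$ is a product of $s_1, \ldots, s_{l+1}$.

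The main obstacle is verifying that $H_t \cdot P \cdot H_b = d$ exactly, without spurious loops contributing extra factors of $\delta$ (relevant for the claimed generality in $\delta$, in particular when $\delta = 0$). A careful choice of canonical noncrossing closures in $H_t$ and $H_b$, so that bottom arcs of $H_t$ pair with top arcs of $H_b$ through the identity portion of $P$ into precisely the arcs and propagating lines of $d$ rather than into closed loops, resolves this. That is a fiddly but routine bookkeeping check against the standardised form of $d$.
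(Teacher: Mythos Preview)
Your proposal has a genuine gap at the very first step. You claim that after the standardisation procedure ``all crossings of $d$ lie in the middle permutation $\pi$, while $h_t$ and $h_b$ contain no crossings''. This is false. Reread the five standardisation steps: Step~3 only separates \emph{nested} arcs, and Step~5 only moves crossings \emph{between propagating lines} into the middle. Nothing in the procedure eliminates crossings between two non-nested arcs, or between an arc and a propagating line that passes through it. A simple counterexample: for $n=4$, $l=0$, the diagram with top arcs $\{1,3\}$ and $\{2,4\}$ (and, say, bottom arcs $\{1',2'\}$, $\{3',4'\}$) has a single height-$0$ crossing between the two top arcs, and standardisation cannot remove it. Your $H_t$ is therefore not a Temperley--Lieb diagram in general, and the factorisation $H_t\cdot P\cdot H_b$ with $H_t,H_b\in\TL_n$ collapses.

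The paper's argument is built precisely around this obstruction. It does \emph{not} attempt to clear all crossings into $\pi$; instead it runs a double induction on $l$ and on the number $r$ of crossings of height exactly $l$. It locates one height-$l$ crossing (which may sit in $\pi$, or be an arc--propagating crossing in $w_1$ or $w_2$, or an arc--arc crossing in $w_1$ or $w_2$: these are the three cases 1, 2a, 2b), and shows by a local surgery how to factor $d=d_1\, s_{l+1}\, d_2$ where $d_1$ and $d_2$ together carry strictly fewer height-$l$ crossings. The induction then bottoms out at $J_{l-1,n}$ and ultimately at $\TL_n$. Your approach could be salvaged only if you supplied an independent argument that half-diagrams of height $\le l$ factor through $\TL_n$ and $s_1,\ldots,s_{l+1}$, but that is essentially the whole content of the theorem and requires the same kind of case analysis the paper carries out.
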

 \begin{proof}
We want to prove that the algebra $J_{l,n}(\delta)$ is generated by the set $A$
\begin{center}
$A:=\{u_i\}_{i=1}^{i=n-1}\cup \{s_m\}_{m=1}^{m=l+1}.$
\end{center}
It is clear that the set of diagrams which is generated by $A$ is
contained in $J_{l,n}(\delta)$ since the concatenation of these
diagrams can  never increase the left height $l$. So we need to prove
that every element in $J_{l,n}(\delta)$ can be written as a product of
elements belonging to $A$. Now the strategy of the proof is as follows:
\begin{itemize}
  \item Use the induction on the left-height $l$ of any diagram $d\in
    J_{l,n}(\delta)$ and on the number of features $r$ of left-height
    $l$
  \item ``standardise'' the drawing for the diagram $d$
  \item Divide the diagram $d$ into $d_1$, $s_{l+1}$ and $d_2$ such
    that $d_1$ and $d_2$ have less than $r$ features of left-height $l$
\end{itemize}
\begin{figure}[!h]
\begin{center}
  \begin{tikzpicture}
  \draw (0,0)--(0,1.5) node at (-.5,.75){$d=$};
     \draw (0,1.5)--(2.5,1.5) node at (3,.75){$\leadsto$};
     \draw (2.5,1.5)--(2.5,0);
     \draw (2.5,0)--(0,0);
     \fill [gray] (0,0) rectangle (2.5,1.5);
   \end{tikzpicture}
  \begin{tikzpicture}
    \draw (0,0)--(0,1.5)  node at (1.25,1.25){$d_1$};
     \draw (0,1.5)--(2.5,1.5) node at (1.25,.25){$d_2$} ;
     \draw (2.5,1.5)--(2.5,0) node at (1.25,.75){$s_{\tiny{l+1}}$};
     \draw (2.5,0)--(0,0);
     \draw (0,.5)--(2.5,.5);
     \draw (0,1)--(2.5,1);
      \draw(0,1.5)--(2.5,1.5);
  \end{tikzpicture}
   \end{center}
   \end{figure}

If $l=-1$, then the result follows from that for the Temperley-Lieb
algebra $\TL_n(\delta)$\cite[Section 6.1 and 6.3]{potts}.

Now suppose $d$ is any diagram in $J_{l,n}(\delta)$ with fixed left
height $l\in\{0, 1, 2, \ldots, n-2\}$.
Suppose $d$ is a
diagram with $r$ features of left height $l$ and $m$ propagating
lines.
  First, we ``standardise'' drawing for the diagram $d$ as above.
 Let $w_1$ be the top
  half-diagram with $c_1$ features of left height $l$, let $w_2$ be
  the bottom half-diagram with $c_2$ features of left height $l$ and
  $\pi$ is the middle of diagram $d$ with left height $l$.
This standardisation
either reduces the number of crossings of left-height $l$ in the
picture for the diagram $d$ or leaves it
the same, but it cannot increase it. In particular, it does not create
a crossing with a higher height than $l$.

$$\begin{tikzpicture}
    \draw (0,0)--(0,1.5)  node at (1.25,1.25){$w_1$};
     \draw (0,1.5)--(2.5,1.5) node at (1.25,.25){$w_2$} ;
     \draw (2.5,1.5)--(2.5,0) node at (1.25,.75){$\pi$};
     \draw (2.5,0)--(0,0);
     \draw (0,.5)--(2.5,.5);
     \draw (0,1)--(2.5,1);
      \draw(0,1.5)--(2.5,1.5);
  \end{tikzpicture}$$
If $r=0$, then we are done by induction.
For any $r\neq0$, we have the following cases:\\
 \noindent(1) There is a feature of height $l$ in $\pi$. I.e. a crossing of two propagating lines.\\
 \noindent(2) There is no crossing of height $l$ in $\pi$, but the
 crossing of left height $l$ is in $w_1$ or $w_2$.\\
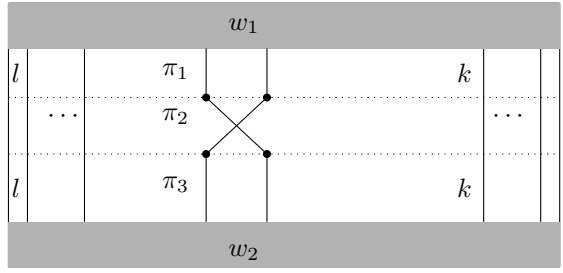
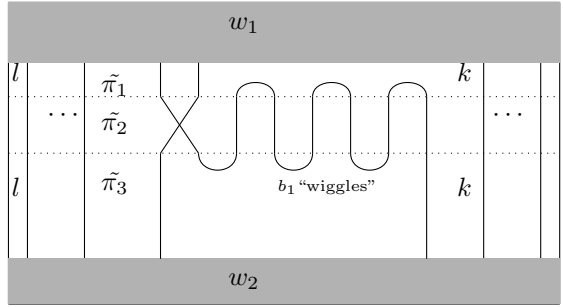
\begin{figure}[!h]
 \centering
\begin{subfigure}[b]{0.55\textwidth}  
  \begin{tikzpicture}
    \draw (0,.5)--(0,4)node at (.1,3.075){$l$};
    \draw (0,4)--(7.25,4)node at(6,3.075){$k$};
    \draw (7.25,4)--(7.25,.5)node at (.1,1.55){$l$};
    \draw (7.25,.5)--(0,.5)node at(6,1.55){$k$};
    \draw (.25,3.4)--(.25,1.1);
    \draw (1,3.4)--(1,1.1) node at (.75,2.5){$\cdots$};
    \draw (6.25,3.4)--(6.25,1.1) node at (6.6,2.5){$\cdots$};
     \draw (7,3.4)--(7,1.1)node at (2.2,3.1){$\pi_1$};
    \fill [color=gray!60](0,.5)rectangle(7.25,1.1);
    \node at (3.1,.7){$w_2$};

    \fill[color=gray!60](0,3.4)rectangle(7.25,4)   ;
     \node at (3.1,3.7){$w_1$};
   \draw [dotted](0,2.75)--(7.25,2.75) node at (2.2,2.5){$\pi_2$};
   \draw [dotted](0,2)--(7.25,2) node at (2.2,1.6){$\pi_3$};
\filldraw[black](2.6,2.75)circle(1.3pt);
\filldraw[black](3.4,2.75)circle(1.3pt);
\filldraw[black](2.6,2)circle(1.3pt);
\filldraw[black](3.4,2)circle(1.3pt);
\draw (2.6,2.75)--(3.4,2);
\draw (3.4,2.75)--(2.6,2);
\draw (2.6,2.75)--(2.6,3.4);
\draw (3.4,2.75)--(3.4,3.4);
\draw (2.6,2)--(2.6,1.1);
\draw (3.4,2)--(3.4,1.1);
 \end{tikzpicture}
 \subcaption{} \label{intersectiontwolines2}
\end{subfigure}
\begin{subfigure}[b]{0.55\textwidth}     
  \begin{tikzpicture}
    \draw (0,0)--(0,4)node at (.1,3.075){$l$};
    \draw (0,4)--(7.25,4)node at(6,3.075){$k$};
    \draw (7.25,4)--(7.25,0)node at (.1,1.55){$l$};
    \draw (7.25,0)--(0,0)node at(6,1.55){$k$};
    \draw (.25,3.2)--(.25,.6);
    \draw (1,3.2)--(1,.6) node at (.75,2.5){$\cdots$};
    \draw (6.25,3.2)--(6.25,.6) node at (6.6,2.5){$\cdots$};
     \draw (7,3.2)--(7,.6)  ;
      \fill [color=gray!60](0,0)rectangle(7.25,.6);
    \node at (3.1,.3){$w_2$};
    \fill[color=gray!60](0,3.2)rectangle(7.25,4)   ;
     \node at (3.1,3.7){$w_1$};
   \draw [dotted](0,2.75)--(7.25,2.75);
   \draw [dotted](0,2)--(7.25,2);
\draw (2,3.2)--(2,2.75)node at (1.4,2.9){$\tilde{\pi_1}$};
\draw (2,2.75)--(2.5,2)node at (1.4,2.4){$\tilde{\pi_2}$};
\draw (2.5,2.75)--(2,2);
\draw (2.5,2)..controls(2.5,1.7)and(3,1.7)..(3,2);
\draw (3,2)--(3,2.75) node at (1.4,1.6){$\tilde{\pi_3}$};
\draw (3,2.75)..controls(3,3)and(3.5,3)..(3.5,2.75);
\draw (3.5,2.75)--(3.5,2);
\draw (3.5,2)..controls(3.5,1.7)and(4,1.7)..(4,2);
\draw (4,2)--(4,2.75);
\draw (4,2.75)..controls(4,3)and(4.5,3)..(4.5,2.75);
\draw (4.5,2.75)--(4.5,2);
\draw (4.5,2)..controls(4.5,1.7)and(5,1.7)..(5,2);
\draw (5,2)--(5,2.75);
\draw (5,2.75)..controls(5,3)and(5.5,3)..(5.5,2.75);
\draw (5.5,2.75)--(5.5,.6);
\node at (4.2,1.55){\tiny$b_1\text{``wiggles''}$};
\draw (2,2)--(2,.6);
\draw (2.5,2.75)--(2.5,3.2);
 \end{tikzpicture} 
 \subcaption{}\label{intersectiontwolines}
\end{subfigure}
\caption{Case 1 the feature of height $l$ in $\pi$}\label{case3diagram}
\end{figure}

Case 1: There is a feature of height $l$ in $\pi.$ I.e. a crossing of two propagating lines.\\
We pull $\pi$ apart in the following way: Firstly we may draw $\pi$ so
that no two crossings lie on the same (invisible) horizontal line.
Then consider the crossing of height $l.$ We may take a small
horizontal strip containing this crossing and no other crossing. Then
we draw two dotted lines above and below the crossing of height $l$
giving this strip. This splits $\pi$ into a product of three
permutations $\pi=\pi_1\pi_2\pi_3\in S_m.$ We have $\pi_2$ is the
permutation $(l+1, l+2)$ in $S_m$ as it is of height $l.$ Since we did
not create any new crossings, we have $w_1\pi_1$ has one less crossing
than $w_1\pi_1\pi_2.$ Also $\pi_3w_2$ has one less crossing than
$\pi_2\pi_3w_2$ and the total number of crossing in $w_1\pi_1$ and
$\pi_3w_2$ is less than $r.$

Now to make $\pi_2$ into $s_{l+1}$ between the two dotted lines, we
need to get the right numbers of vertices and lines on the two dotted
lines. We need $n=l+k+2+2b_1,$ where $b_1$ is the number of wiggles in
Figure \ref{intersectiontwolines}. So we need $n$ to be the same
parity as $m$ which is true. The middle region $\tilde{\pi_2}$ that is
shown in Figure \ref{intersectiontwolines} is then $s_{l+1}.$ Then
$d_1$ is essentially $w_1\tilde{\pi_1}$ and has the same number of
features of height $l$ as in $w_1\pi_1$, $d_2$ is $\pi_3\tilde{w_2}$
and has the same number of features of height $l$ as in $\pi_3w_2.$
Thus $d_1\tilde{\pi_2}d_2$ gives us required decomposition for the
induction.
\begin{figure}[!h]
\centering
\begin{subfigure}[!h]{0.85\linewidth} 
\centering                                      
  \begin{tikzpicture}
  \draw (0,-1.5)rectangle (11.25,4);
  \draw (0,4)--(11.25,4);
  \draw(.25,4)--(.25,-.75);
  \draw (1.75,4)--(1.75,-.75);
  \node at (5.5,1.8){$\vdots$};
 \fill [color=gray!60] (0,3.5)rectangle (5.5,4);
 \draw(.25,4)--(.25,-.75);
  \draw (1.75,4)--(1.75,-.75);
 \draw (1,4)..controls(1,2.3)and(9.8,2.3)..(9.8,4);

 \draw (4,4)--(4,-.75)node at (2.75,1){$\cdots$};
 \draw (4.75,4)..controls(4.75,3)and (9,3)..(9,4);
 \draw (7,4)--(7,-.75);
 \fill [color=gray!60](7.5,4)..controls(7.5,3.3)and(8.5,3.3)..(8.5,4);
 \fill [color=gray!60](10,-.75)rectangle(11.25,4);
  \node at (10.6,2.8){$C$};
   \draw (.5,4)..controls(.5,.5)and(10.4,.5)..(10.4,4);
  \draw (1,4)..controls(1,2.3)and(9.8,2.3)..(9.8,4);

  \node at (8,3.8){$B$};
  \node at (3.25,3.7){$A$};
  \filldraw[black](7,3.3)circle(1.3pt);
  \node at (6.8,3){$l$};
  \fill [color=gray!40](0,-.75)rectangle(11.25,0);
  \node at (5.6,-.5){$\pi$};
  \fill [color=gray!20](0,-1.5)rectangle(11.25,-.75);
    \node at (5.6,-1){$w_2$};
    \draw [decorate,decoration={brace,amplitude=5pt,mirror,raise=.75ex}]
  (-.2,4) -- (-.2,0);
  \node at (-.75,1.78){$w_1$};
  \end{tikzpicture}\subcaption{}\label{intersection arcwithline}
\end{subfigure}  \vspace{.2cm}\break                        
\begin{subfigure}[!h]{0.75\linewidth} 
\centering                                   
  \begin{tikzpicture}
 \fill [color=gray!60] (0,3.5)rectangle (5.5,4);
 \draw (.5,4)--(.5,1.25);
 \draw (1,4)--(1,1.25);
 \draw (9.8,4)--(9.8,1.25);
 \draw (10.8,4)--(10.8,1.25);
 \node at (5.5,.2){$\vdots$};
  \draw (0,-2.25) rectangle (11.25,4);
  \draw (0,4)--(11.25,4)node at (-.25,3){$h_1$};
  \draw(.25,4)--(.25,-.75)node at (-.25,1.6){$h_2$};
  \draw (1.75,4)--(1.75,-.75)node at (-.25,.5){$h_3$};
 \draw (4,4)--(4,-.75)node at (2.4,1.6){$\cdots$};
 \draw (9,4)--(9,1.25);
 \draw (9,1.25)..controls(9,1)and(8.5,1)..(8.5,1.25);
 \draw(8.5,1.25)--(8.5,2);
 \draw (7.75,2)..controls(7.75,2.5)and(8.5,2.5)..(8.5,2);
 \draw(7.75,2)--(7.75,1.25);
 \draw (7.75,1.25)..controls(7.75,1)and(7,1)..(7,1.25);
 \draw (7,1.25)--(7,2);
 \draw (7,2)..controls(7,2.5)and(6.25,2.5)..(6.25,2);
 \draw(6.25,2)--(6.25,1.25);
 \draw (6.25,1.25)..controls(6.25,1)and(5.5,1)..(5.5,1.25);
 \draw(5.5,1.25)--(4.75,2);
 \draw (4.75,4)--(4.75,2);
 \node at  (7.4,3){\tiny$b_2\text{``wiggles''}$};
 \draw(7,4)..controls(7,3.6)and(5.5,2)..(4.75,1.25);
 \draw(4.75,1.25)--(4.75,-.75);
 \fill [color=gray!60](7.5,4)..controls(7.5,3.3)and(8.5,3.3)..(8.5,4);
 \fill [color=gray!60](10,-.75)rectangle(11.25,4);
 \draw (.5,1.25)..controls(.5,-.7)and(10.4,-.7)..(10.4,1.25);
 \draw (10.4,4)--(10.4,1.25);
  \node at (10.6,2){$C$};
  \draw (1,1.25)..controls(1,0)and(9.8,0)..(9.8,1.25);
  \node at (4.2,1.6){$l$};
  \node at (8,3.8){$B$};
  \node at (3.25,3.7){$A$};
  \draw [dotted](0,2)--(10,2);
  \draw [dotted](0,1.25)--(10,1.25);
  \fill [color=gray!40](0,-1.5)rectangle(11.25,-.75);
  \node at (5.6,-1){$\pi$};
  \fill [color=gray!20](0,-2.25)rectangle(11.25,-1.5);
    \node at (5.6,-2){$w_2$};
    \draw [decorate,decoration={brace,amplitude=5pt,mirror,raise=.5ex}]
  (.3,-.2) -- (3.9,-.2);%
  \node at ( 1.4,-.6){\tiny$m-k-1$};
  \node at (4.7,1.6){\tiny$x_1$};
   \filldraw[black](4.75,2)circle(1.3pt);
     \filldraw[black](9,2)circle(1.3pt);
       \filldraw[black](9,1.25)circle(1.3pt);
       \filldraw[black](5.5,2)circle(1.3pt);
        \filldraw[black](5.5,1.25)circle(1.3pt);
       \filldraw[black](4.75,1.25)circle(1.3pt);
  \end{tikzpicture}
  \subcaption{}\label{the third caseintersection twoarcs}
  \end{subfigure}
  \caption{Case 2a the feature of height $l$ in $w_1$ is the
    intersection an arc and a propagating line }\label{case2in the new
    proof}
\end{figure}
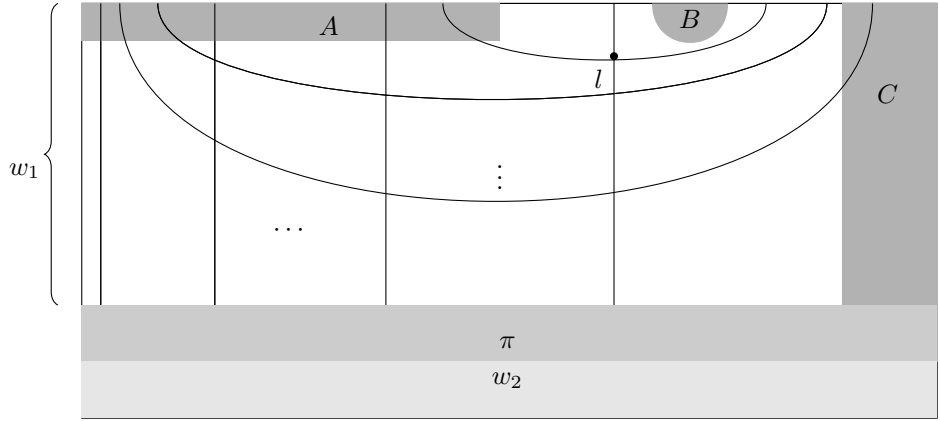
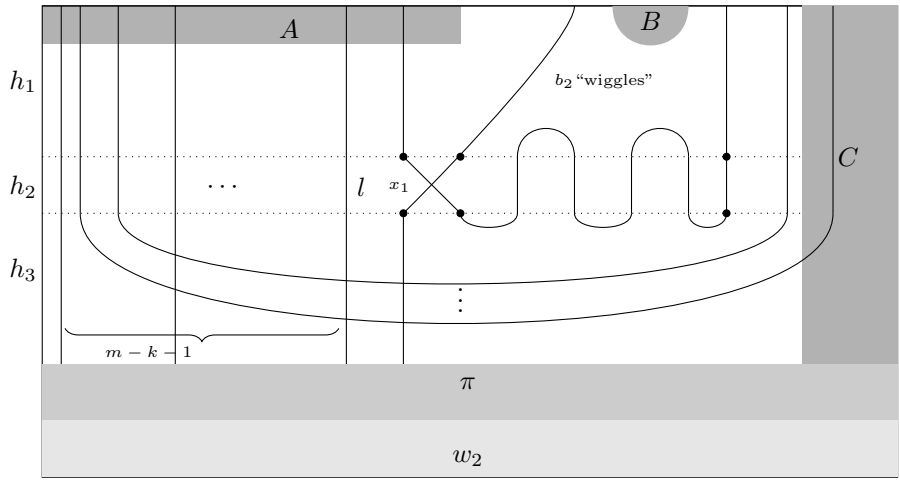\\
Case 2: There is no crossing in $\pi,$ but the crossing of left height $l$ is in $w_1$ or $w_2.$

Case 2a: The feature of left height $l$ is the intersection of an arc
and a propagating line.\\
 We assume without loss of generality that the arc is in the top
 half-diagram, $w_1$, the bottom case is similar. The region labelled
 $A$ in the Figure \ref{intersection arcwithline} can have arcs or
 propagating lines with crossing because the crossings in this region
 have left height less than or equal $l.$ The regions labelled $B$ in
 Figure \ref{intersection arcwithline} have only arcs without any
 crossing because any crossing this would create a crossing with left
 height $l+1.$ The region labelled $C$ in Figure \ref{intersection
   arcwithline} could have crossings of propagating lines or arcs such
 that the left height less than or equal $l.$

We push all covering arcs down without creating new crossings. Then we
pull the crossing of left height $l$ down and draw two dotted
horizontal lines above and below this crossing to get a single
crossing, it has left height $l$ so it looks like $s_{l+1}.$ Let
$k\geq 0$ be the number of propagating lines on the right of the
crossing of left height $l$ and suppose that $j$ is the number of
covering arcs.

We need to get the right number of vertices and lines on the two
dotted lines. There are $(m-k-1)+3+2j+2b_2+k$ vertices on each of the
dotted lines as shown in Figure \ref{the third caseintersection
  twoarcs}, where $b_2$ is the number of wiggles. We have
$m+2j+2+2b_2$ has the same parity of $n.$ Now we need to add
$2b_2=n-(m+2j+2)$ vertices to each of the dotted lines. We wiggle the
arc at position $l+1$ for $b_2$ times to get the right numbers of
vertices and vertical lines on each of the dotted lines. The middle
region $h_2$ that is shown in Figure \ref{the third caseintersection
  twoarcs} looks like $s_{l+1}.$ The diagram $h_2$ has one feature of
left height $l.$ We can decompose the diagram $w_1$ into a product of
three smaller diagrams $w_1=h_1h_2h_3$ with $c_1$ features of
left-height $l.$ The top diagram $h_1$ has strictly less features of
height $l$ than $w_1.$ Any crossing in region labelled $C$ stay in $C$
and these crossings are in $h_1$. The bottom diagram $h_3$ has all
covering arcs and any crossings of propagating lines with these
covering arcs have left height at most $l,$ the number of features of
height $l$ in $h_3$ is the same as $\pi w_2 < r.$ The total number of
crossings of left height $l$ in $h_1$ and $h_3$ is less than $r.$ 
Then $d_1$ is essentially $h_1$ has number of features of left-height
$l$ less than $r,$ $d_2=h_3\pi w_2$ and has the same number of
features in $w_2$ that is less than $r$ and the diagram $s_{l+1}=h_2$
has one feature of height $l.$
Thus $d_1s_{l+1}d_2$ gives us required decomposition for the induction.
 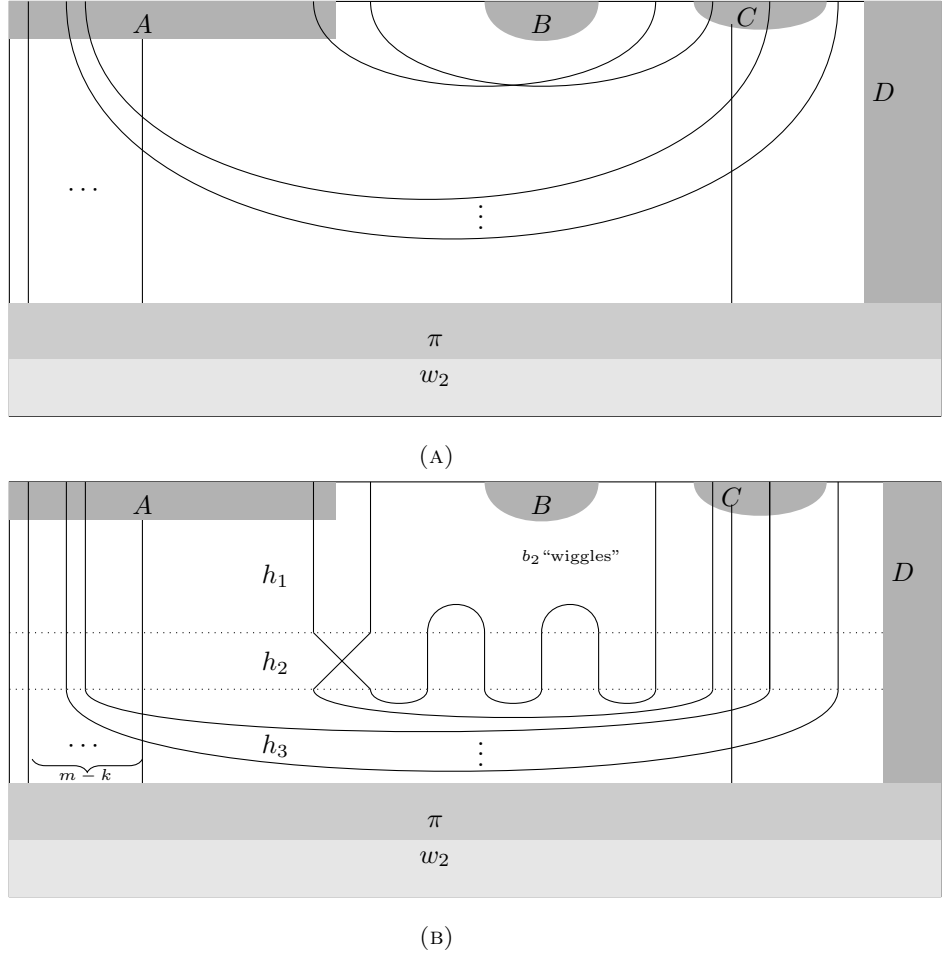
\begin{figure}[!htb]
\centering
\begin{subfigure}[!h]{0.75\linewidth} 
  \begin{tikzpicture}
  \draw (0,-1.5)rectangle (12.25,4);
  \draw (1.75,3.7)--(1.75,0);
  \fill [color=gray!60](0,3.5)rectangle(4.3,4);
  \node at (1.75,3.7){$A$};
  \fill [color=gray!60](11.5,0)rectangle(12.25,4);
  \node at (11.75,2.8){$D$};
  \fill [color=gray!60](11.25,0)rectangle(12.25,4);
  \node at (11.5,2.8){$D$};
 \fill [color=gray!60] (6.25,4)..controls(6.25,3.3)and(7.75,3.3)..(7.75,4);
  \node at(7,3.7){$B$};
  \fill [color=gray!60] (9,4)..controls(9,3.5)and(10.75,3.5)..(10.75,4);
  \node at (9.7,3.8){$C$};
  \draw (4,4)..controls (4,2.5)and (8.5,2.5)..(8.5,4);
  \draw (4.75,4)..controls(4.75,2.5)and(9.25,2.5)..(9.25,4);
  \draw (.75,4)..controls(.75,-.2)and(10.9,-.2)..(10.9,4);
  \draw (1,4)..controls(1,.5)and(10,.5)..(10,4);
  \node at (6.2,1.25){$\vdots$};

  \draw (.25,4)--(.25,0);
  \node at (1,1.5){$\cdots$};
  \draw (9.5,3.7)--(9.5,0);
  \fill [color=gray!40](0,-.75)rectangle(12.25,0);
  \node at (5.6,-.5){$\pi$};
  \fill [color=gray!20](0,-1.5)rectangle(12.25,-.75);
    \node at (5.6,-1){$w_2$};
  \end{tikzpicture}
  \subcaption{}\label{the third caseintersection twoarcs(2)}
  \end{subfigure}\break
  \begin{subfigure}[!h]{0.75\linewidth} 
  \begin{tikzpicture}
  \draw (0,-1.5)rectangle (12.25,4);
  \draw (1.75,3.7)--(1.75,0);
  \fill [color=gray!60](0,3.5)rectangle(4.3,4);
  \node at (1.75,3.7){$A$};
   \draw (8.5,4)--(8.5,1.25);
   \draw (5.5,2)--(5.5,1.25);
   \draw (6.25,2)--(6.25,1.25);
   \draw (7,2)--(7,1.25);
     \draw (7.75,2)--(7.75,1.25);
     \draw(8.5,1.25)..controls(8.5,1)and(7.75,1)..(7.75,1.25);
     \draw (7.75,2)..controls(7.75,2.5)and (7,2.5)..(7,2);
     \draw(7,1.25)..controls(7,1)and(6.25,1)..(6.25,1.25);
     \draw (6.25,2)..controls(6.25,2.5)and (5.5,2.5)..(5.5,2);
     \draw(5.5,1.25)..controls(5.5,1)and(4.75,1)..(4.75,1.25);
     \draw (4,2)--(4.75,1.25);
     \draw (4.75,2)--(4,1.25);
 \node at (6.2,.5){$\vdots$};
  \fill [color=gray!60](11.5,0)rectangle(12.25,4);
  \node at (11.75,2.8){$D$};
 \fill [color=gray!60] (6.25,4)..controls(6.25,3.3)and(7.75,3.3)..(7.75,4);
  \node at(7,3.7){$B$};
  \fill [color=gray!60] (9,4)..controls(9,3.4)and(10.75,3.4)..(10.75,4);
  \node at (9.5,3.8){$C$};
  \draw (4,4)--(4,2);
  \draw (10,4)--(10,1.25);
  \draw (.25,4)--(.25,0) node at (3.5,2.75){$h_1$};
\draw (9.5,3.7)--(9.5,0)node at (3.5,1.6){$h_2$};
 \draw [dotted](0,2)--(11.5,2)node at(3.5,.5){$h_3$};
  \draw [dotted](0,1.25)--(11.5,1.25);
  \node at (1,.5){$\cdots$};
\draw (4,1.25)..controls(4,.75)and(9.25,.75)..(9.25,1.25);
  \draw (.75,1.25)..controls(.75,-.2)and(10.9,-.2)..(10.9,1.25);
  \draw (1,1.25)..controls(1,.5)and(10,.5)..(10,1.25);
  \draw (1,4)--(1,1.25);
  \draw (10,4)--(10,1.25);
  \draw (10.9,4)--(10.9,1.25);
\draw (4.75,4)--(4.75,2);
  \draw (9.25,4)--(9.25,1.25);
  \draw (.75,1.25)--(.75,4);
\node at  (7.4,3){\tiny$b_2\text{``wiggles''}$};
  \fill [color=gray!40](0,-.75)rectangle(12.25,0);
  \node at (5.6,-.5){$\pi$};
  \fill [color=gray!20](0,-1.5)rectangle(12.25,-.75);
    \node at (5.6,-1){$w_2$};
     \draw [decorate,decoration={brace,amplitude=5pt,mirror,raise=.5ex}]
  (.3,.4) -- (1.75,.4);
  \node at (1,.1){\tiny$m-k$};
  \end{tikzpicture}
  \subcaption{}\label{the third caseintersection twoarcs(3)}
  \end{subfigure}
  \caption{Case 2b the feature of height $l$ in $w_1$ is the intersection of two arcs }\label{case3in the new proof}
\end{figure}

Case 2b: The feature of left height $l$ in $w_1$ is the intersection
of two arcs.\\
We assume without loss of generality that the arc is in the top
half-diagram, $w_1$, the bottom case is similar. We can deform the
arcs to look like Figure \ref{the third caseintersection twoarcs(3)}
by pulling the two arcs down (into the middle between two dotted
lines) without creating new crossings. We may assume the two arcs
themselves only cross each other once. The region labelled $A$ in the
Figure \ref{the third caseintersection twoarcs(2)} can have arcs or
propagating lines because they have left height less than or equal
$l.$ The region labelled $B$ has only arcs without any crossings
because any crossing this would create a crossing with left height
$l+1.$ The region labelled $C$ could have arcs or propagating lines
with crossings such that these crossings in the alcove of left height
at most $l.$ The region labelled $D$ may contain arcs or propagating
lines. Also the region $D$ could have crossings of height at most $l.$
Let $k$ be the number of propagating lines on the right of crossing of
height $l.$

Now we take a horizonal strip as in case 1 this gives two horizonal
dotted lines above and below the crossing it has left height $l$ so it
looks like $s_{l+1}.$ We push all the covering arcs down under the
dotted lines.

Now to create $h_2$ (see figure \ref{fig5}) between the two dotted lines, we need to get the
right number of vertices and vertical lines on two dotted lines. Let
$j$ be the number of covering arcs. There are $m-k$ propagating lines
on the left of the crossing. There are $(m-k)+k+2j+2b_3+4$ vertices on
each of the dotted lines as shown in Figure \ref{the third
  caseintersection twoarcs(3)}, where $b_3$ is the number of
wiggles. We have $m+2j+2b_3+4$ has the same parity of $n.$ Now we need
to add $2b_3=n-(m+2j+4)$ vertices to each of the dotted lines. We
wiggle the arc at position $l+1$ for $b_3$ times to get the right
numbers of vertices and vertical lines on each of the dotted
lines. The middle region $h_2$ that is shown in Figure \ref{the third
  caseintersection twoarcs(3)} looks like $s_{l+1}.$

 We have $h_2$ has one feature of left height $l.$ Any crossing in
 region labelled $D$ stay in $h_1.$ Then $w_1$ decompose to three
 small diagrams $w_1=h_1h_2h_3.$ The diagram $h_1$ has number of
 features of height $l$ less than $w_1,$ this include the crossing in
 region labelled $D.$ The diagram $h_3$ has all crossing of
 propagating lines with covering arcs. This bottom diagram has the
 same number of features of height as $\pi w_2<r.$ Then $d_1$ is
 essentially $h_1$ and the number of features of height $l<r,$
 $s_{l+1}=h_2$ and the diagram $d_2$ is $h_3\pi w_2$ and has number of
 features of height $l$ as $\pi w_2<r.$ The total number of features
 of height $l$ in $d_1$ and $d_3$ is less than $r.$ So we can
 decompose the diagram $d$ into a product of three diagrams with
 $d_1s_{l+1}d_2$ with the desired properties.
For example,\\
The diagram $d$ has $2$ features of left-height $l=2.$ Now to make the
diagram $d$ in standardisation, we push all arcs down under the dotted
lines and pull the two arcs in the region $h_2$. Suppose
$w_1=h_1h_2h_3$, then $w_1$ has $2$ features of left-height equal $2.$
The region $h_2$ should has one crossing of left-height $2$ and it
likes $s_{3}.$ The diagram $h_3$ has the same left-height equal $1$ as
in $\pi w_2$. The diagram $h_1$ has $1$ feature of left-height $2.$
Then the diagram $d_1=h_1$, $h_2=s_3$ and $d_2=h_3\pi w_2$. We can
decompose the diagram $d$ into product of three diagrams.
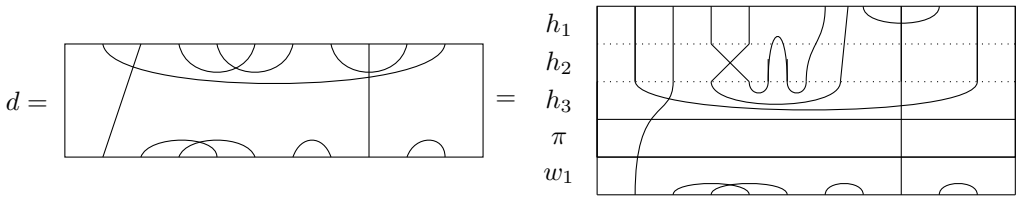
\begin{figure}[ht]
  \begin{tikzpicture}
  \node at (-.5,.75){$d=$};
  \draw (0,0) rectangle (5.5,1.5);
  \draw (1,1.5)--(.5,0);
  \draw (4,1.5)--(4,0);
  \draw (.5,1.5)..controls(.6,.8)and(4.9,.8)..(5,1.5);
  \draw (1.5,1.5)..controls (1.6,1) and (2.4,1)..(2.5,1.5);
  \draw (2,1.5)..controls (2.1,1) and (2.9,1)..(3,1.5);
  \draw (3.5,1.5)..controls (3.6,1) and (4.4,1)..(4.5,1.5);
  \draw (1,0)..controls (1.1,.3) and (2,.3)..(2,0);
   \draw (1.5,0)..controls (1.6,.3) and (2.4,.3)..(2.5,0);
   \draw (3,0)..controls (3.1,.3) and (3.4,.3)..(3.5,0);
   \draw (4.5,0)..controls (4.6,.3) and (5,.3)..(5,0);
   \node at (5.8,.75){$=$};
  \node at (6.5,1.75){$h_1$};
  \node at (6.5,1.25){$h_2$};
  \node at (6.5,.75){$h_3$};
  \node at (6.5,.25){$\pi$};
    \node at (6.5,-.25){$w_1$};
  \draw (7,0)rectangle (12.5,2);
  \draw (7,0)--(12.5,0);
   \draw (7,.5)--(12.5,.5);
 \draw [dotted] (7,1)--(12.5,1);
  \draw [dotted] (7,1.5)--(12.5,1.5);
  \draw (7,0)--(7,1);
  \draw (11,2)--(11,-.5);
\draw (8,-.5)..controls(8,-.3) and (9,-.3)..(9,-.5);
\draw (8.5,-.5)..controls(8.5,-.3) and (9.5,-.3)..(9.5,-.5);
\draw (10,-.5)..controls(10,-.3) and (10.5,-.3)..(10.5,-.5);
\draw (11.5,-.5)..controls(11.5,-.3) and (12,-.3)..(12,-.5);
\draw (7.5,2)--(7.5,1);
\draw (8,2)--(8,1);
\draw (12,2)--(12,1);
\draw (11,2)--(11,1);
\draw (7.5,2)--(7.5,1);
\draw (10.5,2)..controls(10.5,1.7) and (11.5,1.7)..(11.5,2);
\draw (7.5,1)..controls (7.6,.5)and (12,.5)..(12,1);
\draw (12,2)--(12,1);
\draw (8,1)..controls (8,.5)and (7.5,.8)..(7.5,-.5);
\draw (7,-.5)--(7,2);
\draw (7,-.5)--(12.5,-.5);
\draw (12.5,-.5)--(12.5,2);
\draw (8.5,2)--(8.5,1.5);
\draw (9,2)--(9,1.5);
\draw (8.5,1.5)--(9,1);
\draw (9,1.5)--(8.5,1);
\draw (9,1)..controls(9,.8)and(9.25,.8)..(9.25,1);
\draw (9.25,1)--(9.25,1.3);
\draw (9.25,1)..controls(9.25,1.8)and(9.5,1.8)..(9.5,1);
\draw (9.5,1.3)--(9.5,1);
\draw (9.5,1)..controls(9.5,.8)and(9.75,.8)..(9.75,1);
\draw (9.75,1)..controls (9.75,1.5)and (10,1.5)..(10,2);
\draw (8.5,1)..controls(8.5,.6)and(10.2,.6)..(10.2,1);
\draw (10.2,1)--(10.3,2);
   \end{tikzpicture}
\caption{creating $h_2$ in $d$.}\label{fig5}
\end{figure}
\end{proof}

\begin{exmp}
  Let $n=8$ and $l=3$. We can decompose the diagram $d$ below which
  has height $3$, into a product of generators. 
\begin{align*}
  d =
\raisebox{-12pt}{
     \begin{tikzpicture}
    \draw (0,0)--(0,1) ;
    \draw (0,1)--(4.5,1);
    \draw (4.5,1)--(4.5,0);
    \draw (4.5,0)--(0,0);
    \draw (.5,1)..controls (1,.6) and (3,.5)..(3.5,0);
    \draw (1,1)..controls (1.1,.6) and (.75,.5)..(.5,0);
    \draw (1.5,1)..controls (1.5,.7) and (3,.7)..(3,1);
    \draw (2.5,1)..controls (2.5,.7) and (4,.7)..(4,1);
    \draw (2,1)..controls(2,.75) and (1,.5)..(1,0);
    \draw (3.5,1)..controls (3.5,.6) and (2,.5)..(2,0);
    \draw (1.5,0)..controls (1.5,.3) and (3,.3)..(3,0);
     \draw (2.5,0)..controls (2.5,.3) and (4,.3)..(4,0);
    \end{tikzpicture}}
    &=
\raisebox{-197pt}{
    \begin{tikzpicture}
    \draw (0,0)--(0,14); 
    \draw (0,14)--(4.5,14);
  \draw (4.5,14)--(4.5,0);
  \draw (4.5,0)--(0,0);
  \draw (.5,14)--(.5,13) node at (5,13.5){$s_3$};
   \draw (1,14)--(1,13) ;
  \draw (1.5,14)--(2,13);
  \draw (2,14)--(1.5,13);
  \draw (2.5,14)--(2.5,13);
  \draw (3,14)--(3,13);
  \draw (3.5,14)--(3.5,13);
  \draw (4,14)--(4,13);
  \draw (0,13)--(4.5,13);
  \draw (.5,12)--(.5,13) node at (5,12.5){$s_4$};
  \draw (1,13)--(1,12);
  \draw (1.5,13)--(1.5,12);
  \draw (2,13)--(2.5,12);
  \draw (2.5,13)--(2,12);
\draw (3,13)--(3,12);
\draw (3.5,13)--(3.5,12);
\draw (4,13)--(4,12);
\draw  (0,12)--(4.5,12);
\draw (.5,12)--(.5,11) node at (5,11.5){$u_5$};
\draw (1,12)--(1,11);
\draw (1.5,12)--(1.5,11);
\draw (2,12)--(2,11);
\draw (2.5,12)..controls (2.5,11.7) and (3,11.7)..(3,12);
\draw (2.5,11)..controls (2.5,11.3) and (3,11.3)..(3,11);
\draw (3.5,12)--(3.5,11);
\draw (4,12)--(4,11);
\draw (0,11)--(4.5,11);
\draw (.5,11)--(1,10) node at (5,10.5){$s_1$};
\draw (1,11)--(.5,10);
\draw (1.5,11)--(1.5,10);
\draw (2,11)--(2,10);
\draw (2.5,11)--(2.5,10);
\draw (3,11)--(3,10);
\draw (3.5,11)--(3.5,10);
\draw (4,11)--(4,10);
\draw (0,10)--(4.5,10);
\draw (.5,10)--(.5,9) node at (5,9.5){$s_2$};
\draw (1,10)--(1.5,9);
\draw (1.5,10)--(1,9);
\draw (2,10)--(2,9);
\draw (2.5,10)--(2.5,9);
\draw (3,10)--(3,9);
\draw (3.5,10)--(3.5,9);
\draw (4,10)--(4,9);
\draw (0,9)--(4.5,9);
\draw (.5,9)--(.5,8) node at (5,8.5){$u_6$};
\draw (1,9)--(1,8);
\draw (1.5,9)--(1.5,8);
\draw (2,9)--(2,8);
\draw (2.5,9)--(2.5,8);
\draw (3,9)..controls (3,8.7) and (3.5,8.7)..(3.5,9);
\draw (3,8)..controls (3,8.3) and (3.5,8.3)..(3.5,8);
\draw (4,9)--(4,8);
\draw (0,8)--(4.5,8); 
\draw (.5,8)--(.5,7) node at (5,7.5){$s_4$};
\draw (1,8)--(1,7);
\draw (1.5,8)--(1.5,7);
\draw (2,8)--(2.5,7);
\draw (2.5,8)--(2,7);
\draw (3,8)--(3,7);
\draw (3.5,8)--(3.5,7);
\draw (4,8)--(4,7);
\draw (0,8)--(4.5,8);
\draw (0,7)--(4.5,7);
\draw (.5,7)--(.5,6) node at (5.25,6.5){$u_5u_7$};
\draw (1,7)--(1,6);
\draw (1.5,7)--(1.5,6);
\draw (2,7)--(2,6);
\draw (2.5,6)..controls (2.5,6.3) and (3,6.3)..(3,6);
\draw (2.5,7)..controls (2.5,6.7) and (3,6.7)..(3,7);
\draw (3.5,6)..controls(3.5,6.3) and (4,6.3)..(4,6);
\draw (3.5,7)..controls (3.5,6.7) and (4,6.7)..(4,7);
\draw (0,6)--(4.5,6);
\draw (.5,6)--(.5,5) node at (5,5.5){$s_3$};
\draw (1,6)--(1,5);
\draw (1.5,6)--(2,5);
\draw (2,6)--(1.5,5);
\draw (2.5,6)--(2.5,5);
\draw (3,5)--(3,6);
\draw (3.5,5)--(3.5,6);
\draw (4,5)--(4,6);
\draw (0,5)--(4.5,5);
\draw (.5,4)--(.5,5)node at (5,4.5){$u_6$};
\draw (.5,4)--(.5,5);
\draw (1,4)--(1,5);
\draw (1.5,4)--(1.5,5);
\draw (2,4)--(2,5);
\draw (2.5,4)--(2.5,5);
\draw (3,4)..controls(3,4.3) and (3.5,4.3)..(3.5,4);
\draw (3,5)..controls (3,4.7) and (3.5,4.7)..(3.5,5);
\draw (4,4)--(4,5);
\draw (0,4)--(4.5,4);
\draw (.5,3)--(.5,4) node at (5,3.5){$s_4$};
\draw (1,3)--(1,4);
\draw (1.5,3)--(1.5,4);
\draw (2,3)--(2.5,4);
\draw (2.5,3)--(2,4);
\draw (3,3)--(3,4);
\draw (3.5,3)--(3.5,4);
\draw (4,3)--(4,4);
\draw (0,3)--(4.5,3);
\draw (.5,2)--(.5,3) node at (5,2.5){$u_5$};
\draw (1,2)--(1,3);
\draw (1.5,2)--(1.5,3);
\draw (2,2)--(2,3);
\draw (2.5,2)..controls (2.5,2.3) and (3,2.3)..(3,2);
\draw (2.5,3)..controls (2.5,2.7) and (3,2.7)..(3,3);
\draw (3.5,2)--(3.5,3);
\draw (4,2)--(4,3);
\draw (0,2)--(4.5,2);
\draw (.5,1)--(.5,2) node at (5,1.5){$s_4$};
\draw (1,1)--(1,2);
\draw (1.5,1)--(1.5,2);
\draw (2,1)--(2.5,2);
\draw (2.5,1)--(2,2);
\draw (3,1)--(3,2);
\draw (3.5,1)--(3.5,2);
\draw (4,1)--(4,2);
\draw (0,2)--(4.5,2);
\draw(0,1)--(4.5,1);
\draw (.5,0)--(.5,1) node at (5,.5){$s_3$};
\draw (1,0)--(1,1);
\draw (1.5,0)--(2,1);
\draw (2,0)--(1.5,1);
\draw (2.5,0)--(2.5,1);
\draw (3,0)--(3,1);
\draw (3.5,0)--(3.5,1);
\draw (4,0)--(4,1);
    \end{tikzpicture} }\\
&=s_3s_4u_5s_1s_2u_6s_4u_5u_7s_3u_6s_4u_5s_4s_3
\end{align*}
\end{exmp}

\section{KMY algebras are semi-simple if $\delta$ not real}\label{semisimple}

\begin{thm}[{\cite[Theorem 48]{jacob1995linear}}]\label{eigenvalue}
  All eigenvalues of a real symmetric matrix are real.
\end{thm}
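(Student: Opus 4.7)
The plan is to prove the result by a direct computation with the standard Hermitian inner product on $\C^n$, working over $\C$ even though the matrix has real entries. First I would let $A$ be a real symmetric $n\times n$ matrix and, by the fundamental theorem of algebra applied to the characteristic polynomial $\det(tI-A)$, pick any \emph{a priori} complex eigenvalue $\lambda\in\C$ together with a nonzero eigenvector $v\in\C^n$ satisfying $Av=\lambda v$.

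The key observation is that viewed over $\C$, the matrix $A$ is Hermitian: since its entries are real, $\bar A=A$, and since it is symmetric, $A^T=A$, so $A^*:=\bar A^T=A$. I would then evaluate the complex scalar $v^*Av$ in two ways. On one hand,
\[
v^*Av \;=\; v^*(\lambda v) \;=\; \lambda\,v^*v \;=\; \lambda\|v\|^2.
\]
On the other hand, taking the conjugate transpose of this $1\times 1$ matrix and using $A^*=A$,
\[
(v^*Av)^* \;=\; v^*A^*v \;=\; v^*Av,
\]
so $v^*Av$ equals its own complex conjugate and is therefore real.

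Combining the two computations, $\lambda\|v\|^2\in\R$; since $v\neq 0$ forces $\|v\|^2>0$, it follows that $\lambda\in\R$, as required. There is no serious obstacle here: the one subtlety is simply to allow oneself to work over $\C$ from the outset, because an attempt at a direct argument over $\R$ would be awkward (an eigenvalue of a real matrix need not \emph{a priori} be real, and the entire content of the theorem is that in the symmetric case it must be).
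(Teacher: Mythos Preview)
Your argument is correct and is the standard textbook proof of this classical fact. Note, however, that the paper does not actually supply its own proof of this statement: it is quoted as \cite[Theorem~48]{jacob1995linear} and used as a black box in the subsequent lemma, so there is no ``paper's proof'' to compare against beyond observing that your approach is precisely the kind of argument one finds in the cited linear algebra text.
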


Let $\{v_i \}$ be a basis for $S^\lambda$.
Then 
we may choose the inner product on $S^{\lambda}$ so that it has real
values on the basis elements and so that $\langle v_i\mid v_j \rangle \in \R$ for $i\ne j$.
Indeed, the Specht modules are defined over $\Z$ and thus we can
choose to have integral values for the inner product between basis
elements. 

\begin{lem}\label{determin of real}
If $\delta$ is not real then the determinant of Gram matrix for the
cell module $\Delta_{l,n}(n-2,\lambda)$, $\lambda\vdash n$ is not zero.
\end{lem}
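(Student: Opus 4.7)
The plan is to exhibit the Gram matrix $G$ explicitly in the form $\delta I + B$ for a real symmetric matrix $B$, so that Theorem~\ref{eigenvalue} gives $\det(G)=\prod_i(\delta+\mu_i)$ with all $\mu_i\in\R$, which is non-zero for $\delta\notin\R$. Since non-degeneracy of the Gram form is basis-independent, I am free to choose the basis of $S^\lambda$. I choose $\{v_i\}$ to be orthonormal for a positive-definite symmetric-group-invariant inner product, available since $S^\lambda$ is simple over $\C$ and defined over $\R$; then the Specht Gram matrix becomes the identity and every permutation $\pi$ acts by a real orthogonal matrix $[\pi]$.

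The key step is a case analysis on the cellular Gram entry
\[
G_{(h_1,v_i),(h_2,v_j)} = \langle v_i,\, \Phi(h_1,h_2)\, v_j\rangle_{S^\lambda},
\]
where $\Phi(h_1,h_2)$ (possibly with a $\delta$ factor) encodes the loops and middle permutation obtained by stacking $h_1^{*}$ above $h_2$. Each basis half-diagram here carries a single arc, so writing the arcs as $A_1, A_2\subset\{1,\ldots,n\}$ there are exactly three possibilities. If $A_1=A_2$, the two arcs combine into one loop and the remaining propagating lines match identically, giving $\Phi=\delta\cdot\id$. If $|A_1\cap A_2|=1$, no loop forms and the stacking keeps all $n-2$ propagating lines; tracing paths through the shared endpoint produces a (possibly nontrivial) permutation $\pi(h_1,h_2)$, so $\Phi=\pi(h_1,h_2)$. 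If $A_1\cap A_2=\emptyset$, the arcs are absorbed into propagating lines, dropping the propagating count to $n-4$ and placing the stacking in the lower ideal, so $\Phi=0$. Assembling these block-by-block yields $G = \delta I + B$, where $B$ has zero diagonal blocks and $(h_1,h_2)$-block equal to $[\pi(h_1,h_2)]$ on shared-endpoint pairs (zero otherwise).

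Reality of $B$ follows from integrality of the symmetric-group action on $S^\lambda$. Symmetry of $B$ follows from the identity $\pi(h_2,h_1)=\pi(h_1,h_2)^{-1}$, visible by swapping the roles of the two halves in the stacking, together with $[\pi^{-1}]=[\pi]^{T}$ for orthogonal matrices. Theorem~\ref{eigenvalue} then delivers real eigenvalues for $B$ and the conclusion follows. The main obstacle will be verifying the shared-endpoint case: one must trace four paths through the two endpoints of $A_1$ and of $A_2$, confirm that no closed loop forms and that exactly $n-2$ propagating lines survive, and identify the resulting permutation as an element of the symmetric group acting on $S^\lambda$ (forced by the height constraint $\le l$ on diagrams in $J_{l,n}(\delta)$).
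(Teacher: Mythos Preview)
Your proposal is correct and follows essentially the same strategy as the paper: write the Gram matrix as $\delta I + B$ with $B$ real symmetric, then invoke Theorem~\ref{eigenvalue} to conclude that all eigenvalues of $G$ have the form $\delta+\mu$ with $\mu\in\R$, hence are non-zero for $\delta\notin\R$. Your version is in fact more careful than the paper's own argument---you make the orthonormal Specht basis choice explicit (needed so that the diagonal blocks really are $\delta I$ rather than $\delta G_S$), you give the clean three-way case split on $|A_1\cap A_2|$, and you justify symmetry of $B$ via $\pi(h_2,h_1)=\pi(h_1,h_2)^{-1}$ and orthogonality---whereas the paper conveys the same content through pictures and the remark that loops occur only on the diagonal.
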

\begin{proof}
Let $G_{l,n}(n-2,\lambda)$ be the Gram matrix of 
$\Delta_{l,n}(n-2,\lambda)$. In computing the Gram matrix, we note
that it is only possible to get loops when the basis elements are
multiplied by themselves. ``Mixed'' products have either a real
value (coming from the inner product on the Specht module) or are
zero.

A basis element of $\Delta_{l,n}(n-1,\lambda)$ has one of two typical
types. Either it has one arc that can cross any of the first l-1
propagating lines, or is has an arc that does not cross any lines. Let
$v$ and $v'$ be basis elements in the Specht module
$S^\lambda$.
Elements of the half diagram basis for
$\Delta_{l,n}(n-1,\lambda)$ look like: 
$$
\begin{picture}(0,0)%
\includegraphics{basiselement1.pstex}%
\end{picture}%
\setlength{\unitlength}{3947sp}%
\begingroup\makeatletter\ifx\SetFigFont\undefined%
\gdef\SetFigFont#1#2#3#4#5{%
  \reset@font\fontsize{#1}{#2pt}%
  \fontfamily{#3}\fontseries{#4}\fontshape{#5}%
  \selectfont}%
\fi\endgroup%
\begin{picture}(1974,774)(739,-1573)
\put(1426,-1486){\makebox(0,0)[lb]{\smash{{\SetFigFont{10}{12.0}{\rmdefault}{\mddefault}{\itdefault}{\color[rgb]{0,0,0}$v$}%
}}}}
\end{picture}%

\qquad \raisebox{20pt}{\mbox{or}} \qquad
\begin{picture}(0,0)%
\includegraphics{basiselement2.pstex}%
\end{picture}%
\setlength{\unitlength}{3947sp}%
\begingroup\makeatletter\ifx\SetFigFont\undefined%
\gdef\SetFigFont#1#2#3#4#5{%
  \reset@font\fontsize{#1}{#2pt}%
  \fontfamily{#3}\fontseries{#4}\fontshape{#5}%
  \selectfont}%
\fi\endgroup%
\begin{picture}(1674,774)(439,-448)
\put(676,-361){\makebox(0,0)[lb]{\smash{{\SetFigFont{10}{12.0}{\rmdefault}{\mddefault}{\itdefault}{\color[rgb]{0,0,0}$v'$}%
}}}}
\end{picture}%

$$
A typical inner product then looks like:
$$
\begin{picture}(0,0)%
\includegraphics{innerproduct1.pstex}%
\end{picture}%
\setlength{\unitlength}{3947sp}%
\begingroup\makeatletter\ifx\SetFigFont\undefined%
\gdef\SetFigFont#1#2#3#4#5{%
  \reset@font\fontsize{#1}{#2pt}%
  \fontfamily{#3}\fontseries{#4}\fontshape{#5}%
  \selectfont}%
\fi\endgroup%
\begin{picture}(1974,1524)(739,-1573)
\put(1426,-1486){\makebox(0,0)[lb]{\smash{{\SetFigFont{10}{12.0}{\rmdefault}{\mddefault}{\updefault}{\color[rgb]{0,0,0}$v'$}%
}}}}
\put(1426,-241){\makebox(0,0)[lb]{\smash{{\SetFigFont{10}{12.0}{\rmdefault}{\mddefault}{\updefault}{\color[rgb]{0,0,0}$v$}%
}}}}
\end{picture}%

\quad \raisebox{40pt}{\mbox{ or }} \quad
\begin{picture}(0,0)%
\includegraphics{innerproduct2.pstex}%
\end{picture}%
\setlength{\unitlength}{3947sp}%
\begingroup\makeatletter\ifx\SetFigFont\undefined%
\gdef\SetFigFont#1#2#3#4#5{%
  \reset@font\fontsize{#1}{#2pt}%
  \fontfamily{#3}\fontseries{#4}\fontshape{#5}%
  \selectfont}%
\fi\endgroup%
\begin{picture}(2049,1524)(589,-1573)
\put(1426,-241){\makebox(0,0)[lb]{\smash{{\SetFigFont{10}{12.0}{\rmdefault}{\mddefault}{\updefault}{\color[rgb]{0,0,0}$v$}%
}}}}
\put(1426,-1486){\makebox(0,0)[lb]{\smash{{\SetFigFont{10}{12.0}{\rmdefault}{\mddefault}{\updefault}{\color[rgb]{0,0,0}$v'$}%
}}}}
\end{picture}%

$$
$$
\raisebox{40pt}{\mbox{ or }} \quad
\begin{picture}(0,0)%
\includegraphics{innerproduct3.pstex}%
\end{picture}%
\setlength{\unitlength}{3947sp}%
\begingroup\makeatletter\ifx\SetFigFont\undefined%
\gdef\SetFigFont#1#2#3#4#5{%
  \reset@font\fontsize{#1}{#2pt}%
  \fontfamily{#3}\fontseries{#4}\fontshape{#5}%
  \selectfont}%
\fi\endgroup%
\begin{picture}(2424,1524)(739,-2173)
\put(1351,-2086){\makebox(0,0)[lb]{\smash{{\SetFigFont{10}{12.0}{\rmdefault}{\mddefault}{\updefault}{\color[rgb]{0,0,0}$v'$}%
}}}}
\put(1351,-886){\makebox(0,0)[lb]{\smash{{\SetFigFont{10}{12.0}{\rmdefault}{\mddefault}{\updefault}{\color[rgb]{0,0,0}$v$}%
}}}}
\end{picture}%

\quad \raisebox{40pt}{\mbox{ or }} \quad
\begin{picture}(0,0)%
\includegraphics{innerproduct4.pstex}%
\end{picture}%
\setlength{\unitlength}{3947sp}%
\begingroup\makeatletter\ifx\SetFigFont\undefined%
\gdef\SetFigFont#1#2#3#4#5{%
  \reset@font\fontsize{#1}{#2pt}%
  \fontfamily{#3}\fontseries{#4}\fontshape{#5}%
  \selectfont}%
\fi\endgroup%
\begin{picture}(1749,1524)(589,-1573)
\put(1126,-1486){\makebox(0,0)[lb]{\smash{{\SetFigFont{10}{12.0}{\rmdefault}{\mddefault}{\updefault}{\color[rgb]{0,0,0}$v'$}%
}}}}
\put(1126,-286){\makebox(0,0)[lb]{\smash{{\SetFigFont{10}{12.0}{\rmdefault}{\mddefault}{\updefault}{\color[rgb]{0,0,0}$v$}%
}}}}
\end{picture}%

$$
and other variations.
Thus we can write the matrix $G_{l,n}(n-2,\lambda)$ as $M+\delta I_n$ where
and $I_n$ is the unit matrix and $M$ is a real matrix.

Now $\xi +\delta$ is an eigenvalue of $M+\delta I_n$ if and only
 if $\xi$ is an eigenvalue of $M$. Since the matrix $M$ is real and
 symmetric the eigenvalues of $M$ are real from Theorem
 \ref{eigenvalue}. Thus if $\delta$ is complex, $M+\delta I_n$ can have
 no real eigenvalues. So the determinant cannot be zero as no
 eigenvalue is zero.
\end{proof}

\begin{thm}\label{thm:semisimple}
For $n\geq 0$, if $\delta$ is not real then the algebra $J_{l,n}(\delta)$ is semisimple.
\end{thm}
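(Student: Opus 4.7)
The plan is to invoke the CMPX semisimplicity criterion (Theorem~\ref{J_{l,n}semisimple}(ii)), which reduces the question to showing that
\[
\Hom\bigl(\Delta_{l,n}(n,\lambda), \Delta_{l,n}(n-2,\mu)\bigr) = 0
\]
for every admissible pair of weights $(n,\lambda) \in \Lambda_{n,p_l}^n$ and $(n-2,\mu) \in \Lambda_{n,p_l}^{n-2}$. Because the CMPX framework has already been verified in Section~\ref{cmpx}, this is the only thing left to establish.

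First I would observe that $(n,\lambda)$ sits at the minimal level of the heredity chain for $J_{l,n}(\delta)$: by axiom (A2), for any $\lambda \in \Lambda_{n,p_l}^n$ one has $\Delta_{l,n}(n,\lambda) \cong L_{l,n}(n,\lambda)$, realised as the lift of a (simple) Specht module from the semisimple quotient $\C\sym_{\min\{n,l+2\}}$. So the source of the Hom is already simple.

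Next I would use Lemma~\ref{determin of real}: when $\delta$ is not real, the Gram matrix associated to the cell module $\Delta_{l,n}(n-2,\mu)$ is non-singular. By the standard Graham--Lehrer theory of cellular algebras, non-degeneracy of the Gram form implies that the cell module coincides with its simple head, hence $\Delta_{l,n}(n-2,\mu) \cong L_{l,n}(n-2,\mu)$ as well. With both modules simple and labelled by distinct weights (since $(n,\lambda) \neq (n-2,\mu)$), Schur's lemma gives vanishing of the Hom space; feeding this into Theorem~\ref{J_{l,n}semisimple}(ii) concludes semisimplicity of $J_{l,n}(\delta)$.

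The main obstacle is therefore really Lemma~\ref{determin of real}, which is already in hand; the remaining work is a careful bookkeeping check that the Gram form whose non-degeneracy the lemma establishes is exactly the cellular Gram form associated to the cell datum used in the quasi-hereditary structure of axiom (A2). Since both come from the standard pairing of upper and lower half-diagrams through the symmetric-group inner product on $S^\lambda$ (with $\delta$ recording closed loops), this identification is routine. No further induction or case analysis is needed beyond what was already used to verify axioms (A1)--(A6).
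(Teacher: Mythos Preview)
Your proposal is correct and follows essentially the same route as the paper: reduce to the CMPX criterion in Theorem~\ref{J_{l,n}semisimple}(ii), observe that $\Delta_{l,n}(n,\lambda)$ is simple by (A2), use Lemma~\ref{determin of real} to deduce that $\Delta_{l,n}(n-2,\mu)$ is simple from non-degeneracy of its Gram form, and then conclude the Hom vanishes because the two simples carry distinct labels. The paper phrases the last step via quasi-heredity rather than Schur's lemma, but the content is identical.
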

\begin{proof}
Take $(n,\lambda)\in \Lambda_{n,2}^n$ and $ (n-2, \mu)\in
\Lambda_{n,2}^{n-2}.$
The module $\Delta_{l,n}(n,\lambda)$ is isomorphic to the Specht
module $S^\lambda$, $\lambda\vdash l+2$ which is simple in
characteristic zero. We consider
$\Delta_{l,n}(n-2,\mu),\mu\vdash\min\{n,l+2\}$. We
have from Lemma \ref{determin of real} that
$\det G_{l,n}(n-2,\mu)\neq0$ if $\delta$ is not real. So
$\Delta_{l,n}(n-2,\mu)$ is simple.
As $J_{l,n}(\delta)$ is quasi-hereditary these two modules are not
isomorphic. So we have
 \begin{equation*}
         \Hom\bigl(\Delta_{l,n}(n,\lambda),\Delta_{l,n}(n-2,\mu)\bigr)=0
        \end{equation*}
therefore by Theorem \ref{J_{l,n}semisimple}(ii) the algebra
$J_{l,n}(\delta)$ is semisimple.
\end{proof} 


\end{document}